\def\rank{\mathrm{rank}}
\def\covol{\mathrm{Covol}}
\def\vol{\mathrm{Vol}}
\DeclareMathOperator{\Conv}{Conv}
\DeclareMathOperator{\Supp}{Supp}
\newtheorem {theorem}{Theorem}[section]
\newtheorem {corollary}{Corollary}[section]
\newtheorem {lemma}{Lemma}[section]
\newtheorem {definition}{Definition}[section]
\newtheorem {remark}{Remark}[section]
\def\ar{a\kern-.370em\raise.16ex\hbox{\char95\kern-0.53ex\char'47}\kern.05em}
\def\ees{{\accent"5E e}\kern-.385em\raise.2ex\hbox{\char'23}\kern-.08em}
\def\eex{{\accent"5E e}\kern-.470em\raise.3ex\hbox{\char'176}}
\def\AR{A\kern-.46em\raise.80ex\hbox{\char95\kern-0.53ex\char'47}\kern.13em}
\def\EES{{\accent"5E E}\kern-.5em\raise.8ex\hbox{\char'23 }}
\def\EEX{{\accent"5E E}\kern-.60em\raise.9ex\hbox{\char'176}\kern.1em}
\def\ow{o\kern-.42em\raise.82ex\hbox{
  \vrule width .12em height .0ex depth .075ex \kern-0.16em \char'56}\kern-.07em}
\def\OW{O\kern-.460em\raise1.36ex\hbox{
\vrule width .13em height .0ex depth .075ex \kern-0.16em \char'56}\kern-.07em}
\def\UW{U\kern-.42em\raise1.36ex\hbox{
\vrule width .13em height .0ex depth .075ex \kern-0.16em \char'56}\kern-.07em}
\def\DD{D\kern-.7em\raise0.4ex\hbox{\char '55}\kern.33em}
\title[Milnor number of complete intersection]{Uniform stable radius and Milnor number for non-degenerate isolated complete intersection singularities}
\author{TAT THANG NGUYEN$^{\dag}$}
\address{$^\dag$Institute of Mathematics, 18, Hoang Quoc Viet Road, Cau Giay District 10307, Hanoi, Vietnam}
\email{ntthang@math.ac.vn}
\subjclass{14D05,14D06,14B05,14B07}
\keywords{isolated complete intersection singularity, Milnor fibration, Milnor number, mixed newton number, nondegenerate,  uniform stable radius}
\date{ \today}
\begin{document}
\maketitle

\begin{abstract}
We prove that for two germs of analytic mappings $f,g\colon (\mathbb{C}^n,0) \rightarrow (\mathbb{C}^p,0)$  with the same  Newton polyhedra  which are (Khovanskii) non-degenerate and their zero sets  are  complete intersections with isolated singularity at the origin, there is a piecewise analytic family $\{f_t\}$ of analytic maps with $f_0=f, f_1=g$ which has a so-called {\it uniform stable radius for the Milnor fibration}. As a corollary, we show that their Milnor numbers are equal. Also, a formula for the Milnor number  is given in terms of the Newton polyhedra of the component functions. This is a generalization of the result by   
C. Bivia-Ausina. Consequently, we obtain that the Milnor number of a non-degenerate isolated complete intersection singularity is an invariance of Newton boundaries.
\end{abstract}

\section{Introduction}
Let $f \colon (\mathbb{C}^n,0) \rightarrow (\mathbb{C}^p,0)$ be an analytic mapping germ such that $V:=f^{-1}(0)$ is a complete intersection with isolated singularity at the origin. Let $\epsilon_0$ be a positive and
sufficiently small real number such that the sphere $\Bbb{S}^{2n-1}_{\epsilon}$ intersects the variety $V$ transversally for all $\epsilon\leq \epsilon_0$. Let $U$ be an open neighbourhood  of $0$ in $\Bbb{C}^p$ such that sphere $\Bbb{S}^{2n-1}_{\epsilon_0}$ intersect $f^{-1}(c)$ transversally for any $c\in U$. Let $\Bbb{B}^{2n}_{\epsilon_0}$ be the closed ball of radius $\epsilon_0$ and $D(f) \subset U$ be the set of the critical values (the discriminant set) of  the restriction of $f$ to $X^{*}:=f ^{-1}(U) \cap \Bbb{B}^{2n}_{\epsilon_0}$. By the  fibration theorem of Ehreshmann, the restriction 
$$f|_{X^{*}\setminus f^{-1}(D(f))}: X^{*}\setminus f^{-1}(D(f))\longrightarrow U \setminus D(f)$$
 is a locally $C^{\infty}$-trivial fibration. This fibration is called the Milnor fibration at the origin  and its fiber $F_0(f)= f^{-1}(t)\cap \Bbb{B}^{2n}_{\epsilon_0}$ is called the Milnor fiber of $f$ at the origin, where $t\in U \setminus D(f)$ (see \cite{Loo1984}). By a
result of Hamm \cite{Hamm1971}, the Milnor fiber $F_0(f)$ is a non-singular analytic manifold
which is homotopically equivalent to a bouquet of real spheres of dimension $n-p$. The number
of such spheres is called the Milnor number of $f$ at $0$ and is denoted by $\mu_0(f)$ (see \cite{Milnor1968} for the case $p=1$). For each closed submanifold $C\subset U\setminus D(f)$,  the Milnor fibration generates a fibration:
$$f|_{f^{-1}(C)\cap \Bbb{B}^{2n}_{\epsilon_0}}: f^{-1}(C)\cap \Bbb{B}^{2n}_{\epsilon_0}\longrightarrow C.$$
 We call this the Milnor fibration over $C$, or  the monodromy over $C$ of $f$.
 
In this paper we are  interested in the Milnor fibration and the Milnor number of (Khovanskii) non-degenerate (in the sense of \cite{Oka1990, Oka1997}) complete intersection with isolated singularity at the origin. We show that if $f,g\colon (\mathbb{C}^n,0) \rightarrow (\mathbb{C}^p,0)$ are two germs of analytic mappings with the same  Newton polyhedra  which are non-degenerate and their zero sets  are  complete intersections with isolated singularity at the origin then there is a piecewise analytic family $\{f_t\}$ of analytic maps with $f_0=f, f_1=g$ which has an {\it uniform stable radius for the Milnor fibration} and for each $t$, $f_t^{-1}(0)$ is a germ of a complete intersection.  This is  a generalization of \cite[Theorem~2.1]{Oka1979} where the case $p=1$ was considered. We refer to \cite{Oka1973,Eyral2017,Eyral-Oka2017} for other studies involved uniform stable radius in family of hypersurfaces and to \cite[Chapter 5]{Oka1997} for the one in family of complete intersections.  As a consequence, we prove that the Milnor numbers of $f$ and $g$ at the origin are equal; also, in case $n-p\neq 2$, it implies that the zero sets $f^{-1}(0), g^{-1}(0)$ are topological equivalent. That gives an extension for a result in the case $p=1$ by J. Briancon in an unpublished lecture note (see \cite{Briancon}). Finally, we give a formula for the Milnor number of  a non-degenerate isolated complete intersection singularity which is described in terms of the Newton polyhedra of the component functions. This result initiates from the work of A. G. Kouchnirenko in  the case $p=1$ (\cite{Kouchnirenko1976}) and C. Bivia-Ausina in the case $p>1$ with convenient setting and stronger notion of  non-degeneracy (\cite{Bivia2007}).
Our results also give an analog of the $\mu$-constance Theorem due to Le Dung Trang and C. P. Ramanujam (\cite{Le1976}). Similar observations for global settings were considered in \cite{HaHV1996}, \cite{HaHV1997}, \cite{Phamts2008}, \cite{Phamts2010}, \cite{Thang2019}.

\section{Preliminaries} \label{Preliminary}

In this section we present some notations and definitions, which are used throughout this paper.

\subsection{Notations}

We suppose $1 \leqslant  n \in \mathbb{N}$ and abbreviate $(x_1, \ldots, x_n)$ by $x.$   The inner product (resp., norm) on $\mathbb{C}^n$  is denoted by $\langle x, y \rangle$ for any $x, y \in \mathbb{C}^n$ (resp., $\| x \| := \sqrt{\langle x, x \rangle}$ for any $x \in \mathbb{C}^n$). The  complex conjugate of a complex number $c \in \mathbb{C}$ are denoted by  $\overline{c}$.

For each $\epsilon> 0,$ we will write $\Bbb{B}^{2n}_{\epsilon} := \{x \in \mathbb{C}^n \ : \ \|x\| \leq \epsilon\}$ for the closed ball and write $\mathbb{S}^{2n - 1}_{\epsilon} :=
\{x \in \mathbb{C}^n \ : \ \|x\| = \epsilon\}$ for the sphere.

Given nonempty sets $I \subset \{1, \ldots, n\}$ and $A =\mathbb{C}$ or $A =\mathbb{R},$ we define
$$A^I := \{x \in A^n \ : \ x_i = 0 \textrm{ for all } i \not \in I\}.$$

Let $\mathbb{C}^* := \mathbb{C} \setminus \{0\}$ and we denote by $\mathbb{Z}_+$ the set of non-negative integer numbers. If $\alpha = (\alpha_1, \ldots, \alpha_n) \in \mathbb{Z}_+^n,$ we denote by $x^\alpha$ the monomial $x_1^{\alpha_1} \cdots x_n^{\alpha_n}.$

The gradient of an analytic function defined in a neighbourhood of the origin $h \colon (\mathbb{C}^n, 0) \to (\mathbb{C}, 0)$  is denoted by $\nabla h$ as usual, i.e.,
$$\nabla h(x) := \left(\overline{\frac{\partial h}{\partial x_1}(x), \ldots, \frac{\partial h}{\partial x_n}(x)} \right),$$
so the chain rule may expressed by the inner product $\partial h/\partial {\mathbf{v}} = \langle \mathbf{v}, \nabla h \rangle.$

\subsection{Newton polyhedra and non-degeneracy conditions}

Let $h \colon (\mathbb{C}^n, 0) \to (\mathbb{C}, 0)$ be an analytic  function germ at the origin such that $h(0)=0$. Suppose that $h$ is written as $h = \sum_{\alpha} a_\alpha x^\alpha.$ Then
the support of $h,$ denoted by $\mathrm{supp}(h),$ is defined as the set of those $\alpha  \in \mathbb{Z}_+^n$ such that $a_\alpha \ne 0.$
The {\em Newton polyhedron}  of $h$, denoted by $\Gamma_{+}(h),$ is defined as the convex hull in $\mathbb{R}^n$ of the union of $\{\alpha+ \Bbb{R}^n_{+}\}$ for  $\alpha\in \mathrm{supp}(h).$ The {\em Newton boundary}  of $h$, denoted by $\Gamma(h),$ is by definition the union of compact boundary of $\Gamma_{+}(h).$  We say that a subset $\Gamma_{+}$ of $\Bbb{R}^n_{+}$ is a {\it Newton polyhedron} if there is a subset $A\subset \Bbb{Z}^n_{+}$ such that $\Gamma_{+}$ is equal to the convex hull of the set $\{\alpha + v \ : \ \alpha\in A, v\in \Bbb{R}^n_{+}\}$. A Newton polyhedron $\Gamma_{+}$ is said to be {\it convenient} if it intersects each coordinate axis in a point different from the origin.
The function $h$ or its Newton boundary is said to be {\em convenient} if $\Gamma(h)$ is convenient.
For each (compact) face $\Delta$ of $\Gamma_{+}(h),$  we will denote by $h_\Delta$ the polynomial $\sum_{\alpha \in \Delta} a_\alpha x^\alpha;$ if $\Delta \cap \mathrm{supp}(h) = \emptyset$ we let $h_\Delta := 0.$

Given a nonzero vector $q \in \mathbb{R}_{\geq 0}^n,$ we define
\begin{eqnarray*}
d(q, \Gamma_{+}(h)) &:=& \min \{\langle q, \alpha \rangle \ : \ \alpha \in \Gamma_{+}(h)\}, \\
\Delta(q, \Gamma_{+}(h)) &:=& \{\alpha \in \Gamma_{+}(h) \ : \ \langle q, \alpha \rangle = d(q, \Gamma_{+}(h)) \}.
\end{eqnarray*}
It is easy to check that for each nonzero vector $q \in \mathbb{R}_{\geq 0}^n,$ $\Delta(q, \Gamma_{+}(h)) $ is a closed face of $\Gamma_{+}(h).$ Conversely, if $\Delta$ is a closed face of $\Gamma_{+}(h)$ then there exists a nonzero vector $q \in \mathbb{R}_{\geq 0}^n$ such that $\Delta = \Delta(q, \Gamma_{+}(h)).$ 

\begin{remark}{\rm
The following statements follow immediately from definitions:

(i) For each nonempty subset $I$ of $\{1, \ldots, n\},$ if the restriction of $h$ on $\mathbb{C}^I$ is not identically zero, then $\Gamma_{+}(h) \cap \mathbb{R}^I = \Gamma_{+}(h|_{\mathbb{C}^I}).$ Also, for every nonzero vector $q=(q_1, \ldots, q_n) \in \mathbb{R}^I$ with $q_i>0$ if $i\in I$ and $\Delta:= \Delta(q, \Gamma_{+}(h|_{\Bbb{C}^I})),$ one can find a strictly positive vector $q^{'} \in \mathbb{R}_{> 0}^n$ such that $\Delta= \Delta(q^{'}, \Gamma_{+}(h)).$

(ii) Let $\Delta := \Delta(q, \Gamma_{+}(h))$ for some nonzero vector $q := (q_1, \ldots, q_n) \in \mathbb{R}_{\geq 0}^n.$ By definition, $h_\Delta = \sum_{\alpha \in \Delta} a_\alpha x^\alpha$ is a weighted homogeneous polynomial of type $(q, d := d(q, \Gamma_{+}(h))),$ i.e., we have for all $t $ and all $x \in \mathbb{C}^n,$
$$h_\Delta(t^{q_1} x_1,  \ldots, t^{q_n} x_n) = t^d h_\Delta(x_1, \ldots, x_n).$$
This implies the Euler relation
\begin{equation*}
\sum_{i = 1}^n q_i x_i \frac{\partial h_\Delta}{\partial x_i}(x) = d h_\Delta(x).
\end{equation*}
}\end{remark}

Now, let $f=(f^1, \ldots, f^p) \colon (\mathbb{C}^n, 0) \rightarrow (\mathbb{C}^p, 0)$ be an analytic mapping germ at the origin in $\Bbb{C}^n$ such that $f(0)=0$. By $\Gamma(f)$ ($\Gamma_{+}(f)$, correspondingly) we mean the $p-$tupe $(\Gamma(f^1), \Gamma(f^2), \ldots, \Gamma(f^p))$ ($(\Gamma_{+}(f^1), \Gamma_{+}(f^2), \ldots, \Gamma_{+}(f^p))$ ) and we call it the Newton boundary (Newton poyhedron) of the map $f$. We say $f$ is convenient if all coordinate functions $f^j, j=1, \ldots, p$ are convenient. The following definition of non-degeneracy is inspired from the work of Kouchnirenko~\cite{Kouchnirenko1976}, where the case $S = \mathbb{C}^n$ was considered (see also \cite{Oka1990}, \cite{Oka1997}).

\begin{definition}\label{Definition21}{\rm
We say that $f$ is {\em (Khovanskii) non-degenerate} if, for any strictly positive weight vector $q \in \mathbb{R}_{> 0}^n$  we have
\begin{eqnarray*}
f_{\Delta}^{-1}(0)\cap \{x\in \Bbb{C}^n\ : \ \rank (Df_{\Delta}(x))<p\}\subset \{x \in \mathbb{C}^{n} \ : \ x_1\ldots x_n=0\};
\end{eqnarray*}
where $f_{\Delta}$ denotes the map $(f^1_{\Delta_1}, \cdots, f^p_{\Delta_p}) $ with   $\Delta_j := \Delta(q, \Gamma_{+}(f^j))$ for $j=1, \ldots, p.$
}\end{definition}

Now we recall another notion of non-degeneracy introduced by Bivia-Ausina in \cite{Bivia2007}.

Let $\mathcal{O}_n:=\mathcal{O}_{\Bbb{C}^n, 0}$ be the ring of germs of analytic functions at $0\in \Bbb{C}^n$. Consider several germs of analytic functions $g^1,\dots,g^p\in \mathcal{O}_n$, for $p\leq n$. Take
Minkowski sum of
their Newton polyhedra, $\Gamma_{+}:=\Gamma_{+}^1+\cdots+\Gamma_{+}^p, \Gamma_{+}^i= \Gamma_{+}(g^i)$.
Let $\sigma$ be a compact face of $\Gamma_{+}$. By \cite[Lemma 3.4]{Bivia2007} there exists the unique
set  of compact faces, $\sigma_1\subset\Gamma(g^1)$, \dots, $\sigma_p\subset\Gamma(g^p)$ satisfying:
 $\sigma=\sigma_1+\cdots+\sigma_p$.
The part of $g^i$ supported on $\sigma_i$ will also be  denoted by $(g^i)_{\sigma_i}$.

\begin{definition}{\rm (See \cite[Definition 3.5]{Bivia2007})
We say that the sequence $g^1,\dots,g^p$ satisfies the
{\it $(B_\sigma)$ condition} if $\left\{x\in \Bbb{C}^n\ : \ (g^1)_{\sigma_1}(x)=\cdots=(g^p)_{\sigma_p}(x)=0\right\}\cap(\Bbb{C}^*)^n=\emptyset$.

Let $J$ be the ideal of  $\mathcal{O}_n$ generated by $g^1,\dots,g^p$. We say that the sequence $g^1,\dots,g^p$ is a {\it non-degenerate sequence} if: the ring $\mathcal{O}_n/J$ has dimension $n-p$ and the sequence satisfies the $(B_\sigma)$ condition for all the
 compact faces $\sigma$ of $\Gamma_{+}$ of dimension $\dim(\sigma)\leq p-1$.
}
\end{definition}

To define the non-degeneracy of the map $f=(f^1,\dots,f^p)$ we need the notion of
non-degeneracy of modules.

For any ideal $J$ the Newton polyhedron of $J$ is defined by
 $$\Gamma_{+}(J)=\Conv\Big({\mathop\cup\limits}_{f\in J}(\Supp(f)+\Bbb{R}^n_{\geq 0})\Big).$$

Consider a submodule of a free module, $M\subset \mathcal{O}_n^{\oplus p}$. Denote by $A_M$
its generating
 matrix, i.e. a $p\times s$ matrix with entries in $\mathcal{O}_n$, whose columns generate
 the module.
 Denote by $M_i$ the ideal in $\mathcal{O}_n$ generated by the entries of $i$'th row of
 $A_M$.
  (It does not depend on the choice of generators of the module.) The {\em Newton polyhedron of
  $M$} is
   defined to be $\Gamma_{+}(M):=\Gamma_{+}(M_1)+\cdots+\Gamma_{+}(M_p)$.
   (Here each $M_i$ is an ideal and we use the definition of $\Gamma_{+}(J)$ as above.
   In the case of one-row-matrix $M$ itself is an ideal.)
    For any compact face $\sigma$ of $\Gamma_{+}(M)$ take
   its (unique) presentation $\sigma=\sigma_1+\cdots+\sigma_p$, $\sigma_i\subset\Gamma_{+}(M_i)$, as above.
   Denote by $M|_\sigma$ the matrix of restrictions, its $i$'th row consists of the restrictions
   onto $\sigma_i$.
   (Note that all the restrictions are polynomials, not just power series.)

\begin{definition}{\rm
The module/matrix $M$ is called {\it Newton-non-degenerate} if for any compact face $\sigma\subset\Gamma_{+}(M)$ the following property holds:
\[\{x\in\Bbb{C}^n:\ \rank(M|_\sigma(x))\leq p\}\cap(\Bbb{C}^{*})^n=\emptyset.\]

}
\end{definition}

Finally, for a map $f=(f^1,\dots,f^p):(\Bbb{C}^n,0)\to(\Bbb{C}^p,0)$ consider a version of degeneracy
matrix,
 describing the singular locus:
\begin{equation*}
N(f):=\begin{pmatrix} x_1\frac{\partial f^1}{\partial x_1}&\dots&x_n\frac{\partial f^1}{\partial x_n}&f^1&\dots&0\\
\dots&\dots&\dots&\dots&\dots&\dots\\
x_1\frac{\partial f^p}{\partial x_1}&\dots&x_n\frac{\partial f^p}{\partial x_n}&0&\dots&f^p
\end{pmatrix}.
\end{equation*}

\begin{definition}\label{newtonnondegeneratedef}{\rm (See \cite[Definition 3.5]{Bivia2007})
Consider the convenient map $f=(f^1,\dots,f^p):(\Bbb{C}^n,0)\to(\Bbb{C}^p,0)$.
 The map $f$ is called {\it Newton-non-degenerate} if
\\(i) the sequences $f^1,\dots,f^r$ are non-degenerate for any $r=1,\dots,p-1$, and
\\(ii) the submodule $N(f)\subset\mathcal{O}_n^{\oplus p}$ is Newton-non-degenerate.
}
\end{definition}

\begin{remark}
{\rm The notion of Newton-non-degenerate in the sense of Definition \ref{newtonnondegeneratedef} is stronger than the notion of Khovanskii non-degeneracy in the sense of Definition \ref{Definition21}, see \cite[Example 6.10]{Bivia2007} for detail.
 }
\end{remark}

\subsection{Mixed Newton numbers}

 Let $\Gamma_{+}\subset \Bbb{R}^n_{\geq 0}$ be a  convenient Newton polyhedron polyhedron, its covolume is defined as $\covol(\Gamma_{+}):= \vol_n(\Bbb{R}^n\setminus \Gamma_{+})$, where $\vol_n$ denotes the normalized $n$-dimensional volume in $\Bbb{R}^n$. For a collection $\Gamma_{+}^1, \ldots, \Gamma_{+}^p$ of convenient polyhedra, we consider the scaled Minkowski sum $\lambda_1\Gamma_{+}^1+\cdots \lambda_p\Gamma_{+}^p$. Its covolume is a polynomial in $\lambda_i$ (by \cite[Theorem 10.4]{KK}):
$$\covol(\lambda_1\Gamma_{+}^1+\cdots \lambda_p\Gamma_{+}^p)= \sum \left(\begin{array}{ccc}
n \\
k_1, \ldots, k_p
\end{array}\right)\covol\Big((\Gamma_{+}^1)^{k_1}, \ldots, (\Gamma_{+}^p)^{k_p}\Big)\left(\prod_{i=1}^p\lambda_i^{k_i}\right)$$
where the sum runs over all tupes $k_1, \ldots, k_p$ which $k_i\geq 0, k_1+\ldots + k_p=n$, and 
$$ \left(\begin{array}{ccc}
n \\
k_1, \ldots, k_p
\end{array}\right):= \frac{n!}{k_1!\cdots k_p!}.$$
 The mixed covolumes are the coefficients $\covol\Big((\Gamma_{+}^1)^{k_1}, \ldots, (\Gamma_{+}^p)^{k_p}\Big)$ from the above equation. Here  $\covol\Big((\Gamma_{+}^1)^{k_1}, \ldots, (\Gamma_{+}^p)^{k_p}\Big)$ is a shortand for $\covol\Big(\underbrace{\Gamma_{+}^1,\dots,\Gamma_{+}^1}_{k_1},\dots,\underbrace{\Gamma_{+}^p,\dots,\Gamma_{+}^p}_{k_p}\Big)$. See  \cite[Section 2.3]{KN} for further properties of covolumes.

\begin{definition}\label{definitionMixednumber}{\rm (See \cite[Definition 3.2]{Bivia2007} and \cite[Section 2.6.1]{KN}) Let $\Gamma_{+}^1, \ldots, \Gamma_{+}^p$ be convenient polyhedra in $\Bbb{R}^n_{+}$. The {\it mixed Newton number} of $\Gamma_{+}^1, \ldots, \Gamma_{+}^p$ is defined as:
$$\nu(\Gamma_{+}^1, \ldots, \Gamma_{+}^p)= \sum\limits^{n}_{j=p}(-1)^{n-j}\Big(\sum\limits_{\substack{I\subseteq\{1,\dots,n\}\\|I|=j}}j!
a_j((\Gamma_{+}^1)^I,\dots,(\Gamma_{+}^p)^I)\Big)+(-1)^{n-p+1},$$
 where $$a_j((\Gamma_{+}^1)^I,\dots,(\Gamma_{+}^p)^I):=\sum\limits_{\substack{k_1+\cdots + k_p=j\\k_1,\dots,k_p\geq 1}}
\covol_j\Big(((\Gamma_{+}^1)^I)^{k_1},\dots,((\Gamma_{+}^p)^I)^{k_p}\Big)$$

and $(\Gamma_{+}^j)^I=\Gamma_{+}^j\cap \Bbb{R}^I.$ 
 The coefficient $\covol_j\Big(((\Gamma_{+}^1)^I)^{k_1},\dots,((\Gamma_{+}^p)^I)^{k_p}\Big)$ is the
 $j$-dimensional   mixed-covolume defined above.

}
\end{definition}

\begin{remark}
{\rm  For a Newton polyhedron $\Gamma_{+}$ in $\Bbb{R}^n_{+}$, we have $\covol\Big(\underbrace{\Gamma_{+},\dots,\Gamma_{+}^1}_{n}\Big)=\vol (\Bbb{R}^n_{+}\setminus ~\Gamma_{+})$. Then, if $p=1$ we have $\nu(\Gamma_{+})= \sum_{i=0}^n(-1)^{n-i}i!\vol_i(\Bbb{R}^n_{+}\setminus \Gamma_{+})$ which is the Newton number defined by Kouchnirenko in \cite{Kouchnirenko1976}.

}
\end{remark}

\section{Uniform stable radius} \label{Section3}

Let $F(t,x)=(F^1(t,x), \ldots, F^p(t,x))\colon [0, 1]\times \Bbb{C}^n\to \Bbb{C}^p$ be a mapping such that $F$ is real analytic on $t$ and for each $t\in [0, 1]$ the map $f_t(x):= F(t, x)$ is analytic in some neighbourhood of the origin in $\Bbb{C}^n$ with $f_t(0)=0.$ For each $t\in [0, 1]$ and each $j=1, \ldots, p$, we denote by $f^j_t$ the function $x\mapsto F^j(t, x)$. 

\begin{definition}{\rm (See \cite{Oka1979})
 We say that the positive number $\epsilon_0>0$ is a {\it uniform stable radius} for the Milnor fibration of the family $\{f_t\}_{ t\in [0, 1]}$ if  for each $\epsilon \leqslant \epsilon_0$ and each $t\in [0, 1],$ the set $f_t^{-1}(0)$ intersects transversally with the sphere $\mathbb{S}^{2n-1}_{\epsilon}$.

}
\end{definition}

The following fact is presented in  \cite{Oka1997} as conclusion $(T_{\epsilon})$, though we provide here a different and simpler proof.
 
\begin{lemma}(\cite{Oka1997})\label{Lemma31} 
Suppose that the family $\{f_t\}_{t\in [0, 1]}$ satisfies:
\begin{itemize}
\item[(i)] For each $j=1, \ldots, p$, the Newton boundary of $f^j_t$ is convenient and does not depend on $t\in [0, 1]$;
\item[ii)] For each $t\in [0, 1]$,  the map $f_t$ is 
 non-degenerate.
\end{itemize}
Then, the family  $\{f_t\}_{ t\in [0, 1]}$ has a uniform stable radius.
\end{lemma}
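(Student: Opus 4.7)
The plan is a proof by contradiction via Newton-polyhedron rescaling. Suppose no uniform stable radius exists. Then for each $k \in \mathbb{N}$ we can find $t_k \in [0,1]$ and $x^{(k)} \in \mathbb{C}^n \setminus \{0\}$ with $\|x^{(k)}\| < 1/k$, $f_{t_k}(x^{(k)}) = 0$, and $f_{t_k}^{-1}(0)$ not transverse to $\mathbb{S}^{2n-1}_{\|x^{(k)}\|}$ at $x^{(k)}$. The Lagrange-multiplier criterion for non-transversality supplies a nonzero vector $(\lambda_0^{(k)}, \lambda_1^{(k)}, \ldots, \lambda_p^{(k)}) \in \mathbb{C}^{p+1}$ with
\[
\lambda_0^{(k)}\, x^{(k)} \;=\; \sum_{j=1}^p \lambda_j^{(k)}\, \nabla f^j_{t_k}(x^{(k)}),
\]
where $\nabla$ is as in Section~2; the case $\lambda_0^{(k)} = 0$ captures a singular point of $f_{t_k}^{-1}(0)$, and $\lambda_0^{(k)} \ne 0$ (normalizable to $1$) captures a smooth non-transverse point.

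By compactness of $[0,1]$ and the pigeonhole principle, after passing to a subsequence we may assume $t_k \to t_0 \in [0,1]$ and the support set $I := \{i : x_i^{(k)} \ne 0\}$ is independent of $k$ and non-empty. After a further subsequence we may arrange that the exponents $q_i := \lim_k \log|x_i^{(k)}| / \log\|x^{(k)}\|$ exist and are strictly positive for every $i \in I$, and that the rescaled coordinates $y_i^{(k)} := x_i^{(k)} / \|x^{(k)}\|^{q_i}$ converge to limits $y_i \in \mathbb{C}^*$. The resulting point $y = (y_i)_{i \in I}$ then lies in $(\mathbb{C}^*)^I$ and encodes the asymptotic direction of the sequence.

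Hypothesis (i) ensures that the faces $\Delta_j := \Delta(q, \Gamma_{+}(f^j_{t_0}|_{\mathbb{C}^I}))$ and the weighted degrees $d_j := d(q, \Gamma_{+}(f^j_{t_0}|_{\mathbb{C}^I}))$ do not depend on $t$. Using the weighted-homogeneous expansion of $f^j_{t_k}$ together with analytic continuity in $t$, the leading term in $\|x^{(k)}\|$ of the identity $f^j_{t_k}(x^{(k)}) = 0$ forces $(f^j_{t_0})_{\Delta_j}(y) = 0$ for every $j$. Applying the same leading-order analysis coordinate by coordinate in the Lagrange identity, after renormalizing $(\lambda_0^{(k)}, \ldots, \lambda_p^{(k)})$ by appropriate powers of $\|x^{(k)}\|$ and extracting a further subsequential limit, one obtains a nonzero vector $(\lambda_0, \ldots, \lambda_p)$ together with a nontrivial linear relation among the gradients $\nabla (f^j_{t_0})_{\Delta_j}(y)$, which translates into rank deficiency of the face-polynomial map $f_{t_0, \Delta}$ at $y$. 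By Remark~2.1(i), the weight $q$ extends to a strictly positive $q' \in \mathbb{R}_{>0}^n$ realizing the same faces $\Delta_j = \Delta(q', \Gamma_{+}(f^j_{t_0}))$, and this face-wise rank deficiency at $y$ contradicts the Khovanskii non-degeneracy of $f_{t_0}$ applied with the weight $q'$.

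The main obstacle is the careful balancing of orders in the rescaled Lagrange identity: the coordinates of $\nabla f^j_{t_k}(x^{(k)})$ carry different weighted orders $\|x^{(k)}\|^{d_j - q_i}$, so the multipliers must be renormalized $\lambda_j^{(k)} \mapsto \|x^{(k)}\|^{a_j}\tilde{\lambda}_j^{(k)}$ with exponents $a_j$ chosen so that every coordinate equation contributes a well-defined nontrivial limit and so that the limiting vector $(\lambda_0, \ldots, \lambda_p)$ is nonzero. A secondary subtlety is that $y$, viewed in $\mathbb{C}^n$, lies on the coordinate hyperplanes $\{x_l = 0 : l \notin I\}$; combining the auxiliary Lagrange equations for indices $l \notin I$ (which constrain $\sum_j \lambda_j^{(k)} \partial f^j_{t_k}/\partial x_l(x^{(k)})$ in the appropriate sense) with the face identification of Remark~2.1(i) is what converts the rank deficiency seen on $(\mathbb{C}^*)^I$ into the genuine violation of Definition~2.1 required to close the contradiction.
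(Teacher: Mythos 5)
There is a genuine gap at the heart of your argument: the treatment of the case where the sphere term survives at leading order. You assert that the rescaled limit of the Lagrange identity yields ``a nontrivial linear relation among the gradients $\nabla (f^j_{t_0})_{\Delta_j}(y)$'', i.e.\ rank deficiency of the face map at $y$, and then contradict Definition \ref{Definition21}. But the identity being rescaled is $\sum_{j=1}^p \lambda_j^{(k)} \nabla f^j_{t_k}(x^{(k)}) = \lambda_0^{(k)} x^{(k)}$, and after renormalization the term coming from $\lambda_0$ (the paper's $\lambda_{p+1}$) need not drop out: when the order of $\lambda_{p+1}(s)\phi_i(s)$ equals the leading order $e-q_i$ of the left-hand side (the balanced case $e-\beta_{p+1}=2\min_{i\in I} q_i$ in the paper's notation), the limiting relation is $\sum_{j\in J} \overline{c_j}\,\frac{\partial (f^j_{t^0})_{\Delta_j}}{\partial x_i}(x^0) = \overline{c_{p+1} x^0_i}$ for the indices $i$ with minimal $q_i$ and $=0$ otherwise. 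This is \emph{not} a rank-deficiency statement and does not by itself violate Khovanskii non-degeneracy, which only forbids $\rank(Df_\Delta)<p$ on the zero set of $f_\Delta$ in the torus; a combination of gradients proportional to the position vector is perfectly compatible with rank $p$. The paper closes exactly this case by a step you are missing: pair the relation with the weight vector $(q_i x^0_i)_i$, use the Euler relation for the weighted homogeneous face polynomials together with $(f^j_{t^0})_{\Delta_j}(x^0)=0$ to see that the left side gives $0$, while the right side equals $\sum_{i\in I_1}\overline{c_{p+1}}\,|x^0_i|^2\,(e-\beta_{p+1})/2 \neq 0$. Without this Euler-relation argument your contradiction only covers the subcase in which the sphere term has strictly higher order, so the proof as written is incomplete at its most delicate point.

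A secondary but also real problem is your replacement of the Curve Selection Lemma by a sequence-rescaling argument. From $q_i := \lim_k \log|x_i^{(k)}|/\log\|x^{(k)}\|$ you cannot conclude, even after passing to subsequences, that $y_i^{(k)} := x_i^{(k)}/\|x^{(k)}\|^{q_i}$ converges to a limit in $\mathbb{C}^*$: take for instance $|x_i^{(k)}| = \|x^{(k)}\|/\log(1/\|x^{(k)}\|)$, where the log-ratio tends to $1$ but the rescaled coordinate tends to $0$, and no other choice of exponent repairs this. The non-transversality locus is a (semi)analytic set accumulating at the origin, so the standard route, and the one the paper takes, is to apply the Curve Selection Lemma and expand genuine analytic curves $\phi_i(s)=x^0_i s^{q_i}+\cdots$, $t(s)$, $\lambda_j(s)$; it is precisely these expansions that furnish the well-defined leading exponents and nonzero leading coefficients your rescaled limits are meant to play the role of. If you keep the sequential formulation you must justify that normalization; the cleanest fix is to invoke curve selection as in the paper and then carry out your order-comparison, including the Euler-relation case above.
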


\begin{proof}
Assume by contradiction that the family does not have any uniform stable radius, then there exist sequences $\{t^k\}_{k\in \mathbb{N}}\subset [0, 1], \{\epsilon^k\}_{k\in \mathbb{N}}\to 0$ such that the sets  $f_{t^k}^{-1}(0)$ do not intersect transversally with the spheres $\mathbb{S}^{2n-1}_{\epsilon^k}$. Therefore, there exist sequences $\{x^k\}_{k\in \mathbb{N}} \subset \mathbb{C}^n$ and $\{\lambda_j^k\}_{k\in \mathbb{N}} \subset \mathbb{C}, j=1,\ldots, p+1, $ such that
\begin{enumerate}
\item[(a1)] $\Vert x^k \Vert \rightarrow 0, \Vert x^k \Vert \neq 0$ as $k\rightarrow \infty;$
\item[(a2)] $F(t^k, x^k)=f_{t^k}(x^k) = 0$ for all $k \in \mathbb{N};$
\item[(a3)] $\sum_{j=1}^p\lambda_j^k\nabla f^j_{t^k}(x^k) = \lambda_{p+1}^k {x^k};$
\item[(a4)] The numbers $\lambda_j^k, j = 1, \ldots, p+1,$ are not all zero for any $k\in \mathbb{N}.$
\end{enumerate}
By the Curve Selection Lemma (see \cite{Milnor1968}), there exist analytic curves
\begin{eqnarray*}
\phi=(\phi_1, \ldots, \phi_n) \colon [0,\epsilon) \rightarrow \mathbb{C}^n, \quad  t \colon [0,\epsilon) \rightarrow [0, 1] \quad \textrm{ and } \quad  \lambda_j \colon (0,\epsilon) \rightarrow \mathbb{C}, \ j = 1,\ldots,p+1,
\end{eqnarray*}
such that
\begin{enumerate}
\item[(a5)] $\Vert \phi(s) \Vert \rightarrow 0,$ as $ s\rightarrow 0$ and $\phi(s) \neq 0$  for all $s \in (0,\epsilon);$
\item[(a6)] $F(t(s),\phi(s)) = 0$ for all $s \in (0,\epsilon);$
\item[(a7)] $\sum_{j=1}^p\lambda_j(s)\nabla f^j_{t(s)}(\phi(s)) = \lambda_{p+1}(s) {\phi(s)}\textrm{ for all } s\in (0,\epsilon);$
\item[(a8)] $\lambda_j(s), j = 1, \ldots, p + 1,$ are not all zero for any $s\in(0,\epsilon).$
\end{enumerate}

Put $I:=\{ i \ : \ \phi_i \not\equiv 0 \}$. By the condition (a5), $I \neq \emptyset$. For $i \in I$, we can write the curve $\phi_i$ in terms of parameter as follows
$$\phi_i(s) \ =\ x^0_i s^{q_i} + \textrm{higher-order terms in }s,$$
where $x_i^0 \neq 0$, and $q_i \in \mathbb{Q}.$ We have $\min_{i \in I} q_i > 0,$ due to the condition (a5). We also write $t(s)$ as
$$t(s) \ =t^0+t^1s^{q} + \textrm{higher-order terms in }s,$$
where $t^0=\lim_{s\to 0} t(s)\in [0, 1], t^1\in \Bbb{R}$ and $q>0$.

For each $j=1, \ldots, q$ and each $ t\in [0, 1]$, $f^j_t(x)$ is convenient then $f^j_t|_{\mathbb{C}^I}\not\equiv 0.$  Let $d_j>0$ be the minimal value of the linear function $\sum_{i \in I}\alpha_i q_i$ on $ \mathbb{R}^I \cap \Gamma_{+}(f^j_t)$ and $\Delta_j$ be the maximal face of $\mathbb{R}^I \cap \Gamma_{+}(f^j_t)$, where this linear function attains its minimum value. Remark that the Newton polyhedrons $\Gamma_{+}(f^j_t)$ do not depend on $t$. It is easy to check that
\begin{eqnarray*}
F^j(t(s),\phi(s)) &=& (f^j_{t^0})_{\Delta_j}(x^0) s^{d_j} + \textrm{ higher-order terms in }s,
\end{eqnarray*}
where $x^0 := (x^0_1, \ldots, x^0_n)$ with $x^0_i = 1$ for $i \not \in I$ and $(f^j_{t^0})_{\Delta_j}$ is the face function associated with $f^j_{t^0}$ and $\Delta_j$ which does not depend on the variables $x_i$ if $i\not\in I$. It implies from the condition (a6) that
\begin{eqnarray} \label{EQ03.1}
(f^j_{t^0})_{\Delta_j}(x^0)=0, \quad \textrm{for all}\quad j=1, \ldots, p.
\end{eqnarray}

For $i \in I$ and $j \in \{1, \ldots, p\}$, we also have:
\begin{eqnarray*}
\frac{\partial F^j(t(s),\phi(s))}{\partial x_i}  &=& \frac{\partial (f^j_{t^0})_{\Delta_j}}{\partial x_i} (x^0) s^{d_j - q_i} + \textrm{ higher-order terms in }s.
\end{eqnarray*}
It follows from (a7) and (a8) that one of the functions $\lambda_1(s), \ldots, \lambda_p(s)$ is not equal to zero. For $j \in \{1, \ldots, p\}$ which $\lambda_j(s)\not\equiv 0$ we write
$$\lambda_j(s)=c_js^{\beta_j} + \textrm{ higher-order terms in }s,\quad c_j\neq 0.$$
 Put 
$$e:= \min \left\{\beta_l+d_l\ : \ l\in\{1, \ldots, p\}\quad \textrm{which}\quad  \lambda_l(s)\not\equiv 0\right\}$$
and
$$J:=\{j \ :\ \lambda_j(s)\not\equiv 0, \beta_j+d_j=e\}.$$
Then the condition (a7) is equivalent to the following:
\begin{eqnarray}\label{EQ03.3}
\left( \sum_{j \in J} \overline{c_j} \frac{\partial (f^j_{t^0})_{\Delta_j}}{\partial x_i}(x^0)\right) s^{e-q_i}  + \cdots &  = & \overline{\lambda_{p+1}(s)\phi_i(s)}\quad \textrm{ for all}\quad  i\in I,
\end{eqnarray}
where dots stand for  higher-order terms in $s$.

If $\lambda_{p+1}(s)\equiv 0:$  for all $i\in I$ we get
$$ \sum_{j \in J} \overline{c_j} \frac{\partial (f^j_{t^0})_{\Delta_j}}{\partial x_i}(x^0)=0. $$
Hence
\begin{eqnarray}\label{EQ03.2}
 \sum_{j \in J} \overline{c_j} \nabla (f^j_{t^0})_{\Delta_j}(x^0)=0. 
\end{eqnarray}
Two equalities (\ref{EQ03.1}) and (\ref{EQ03.2}) imply the contradiction to the nondegeneracy of $f_{t^0}.$

If $\lambda_{p+1}(s)\not\equiv 0:$ we also write $\lambda_{p+1}(s)$ as
$$\lambda_{p+1}(s)=c_{p+1}s^{\beta_{p+1}} + \textrm{ higher-order terms in }s,\quad c_{p+1}\neq 0.$$
The equation (\ref{EQ03.3}) becomes
\begin{eqnarray}\label{EQ03.4}
\left( \sum_{j \in J} \overline{c_j} \frac{\partial (f^j_{t^0})_{\Delta_j}}{\partial x_i}(x^0)\right) s^{e-q_i}  + \cdots &  = & \overline{c_{p+1} x^0_i}s^{\beta_{p+1}+q_i} + \cdots\quad \textrm{for all}\quad  i\in I.
\end{eqnarray}
 Since $c_{p+1}$ and $x^0_i$ are nonzero for all $i\in I$, we get 
$e-q_i\leq \beta_{p+1}+q_i$ for all $i\in I$. If $e-\beta_{p+1}< 2\min_{i\in I} q_i$ then by the same argument as above we obtain a contradiction to the non-degeneracy condition of $f_{t^0}$. 
Otherwise, if $e-\beta_{p+1}= 2\min_{i\in I} q_i> 0$. Denote 
$$I_1:=\{i\in I\ :\ q_i=  \min_{l\in I} q_l\}.$$
The equation (\ref{EQ03.4}) gives us the following
\begin{eqnarray*}
\sum_{j \in J} \overline{c_j} \frac{\partial (f^j_{t^0})_{\Delta_j}}{\partial x_i}(x^0) & = &
\begin{cases}
\overline{c_{p+1}x^0_i} & \textrm{ if } i \in I_1, \\
0  & \textrm{ if } i \in I \setminus I_1, \\
0  & \textrm{ if } i \not \in I,
\end{cases}
\end{eqnarray*}
the last equation holds because for all $i \not \in I$ and all $j \in J,$ the polynomial $(f^j_{t^0})_{ \Delta_j}$ does not depend on the variable $x_i.$
Consequently,
\begin{eqnarray*}
\sum_{i = 1}^n \left(\sum_{j \in J} \overline{c_j} \frac{\partial (f^j_{t^0})_{\Delta_j}}{\partial x_i}(x^0)\right) x^0_i q_i & = & \nonumber
\sum_{i \in I_1} \left(\sum_{j \in J} \overline{c_j} \frac{\partial (f^j_{t^0})_{\Delta_j}}{\partial x_i} (x^0)\right) x^0_i q_i \\
& = & \sum_{i\in I_1} \overline{c_{p+1}}|x^0_i|^2 \frac{e-\beta_{p+1}}{2}.
\end{eqnarray*}

On the other hand, by the Euler relation, for all $j \in J,$ we have 
\begin{eqnarray*}
\sum_{i = 1}^n  \frac{(f^j_{t^0})_{\Delta_j}}{\partial x_i}(x^0) x^0_i q_i & = &  d_j (f^j_{t^0})_{\Delta_j}(x^0).
\end{eqnarray*}
Combining this equality and the equation (\ref{EQ03.1}) we get 
\begin{eqnarray*}
\sum_{i = 1}^n \left(\sum_{j \in J} \overline{c_j} \frac{\partial (f^j_{t^0})_{\Delta_j}}{\partial x_i}(x^0)\right) x^0_i q_i
& = &  \sum_{j \in J} \overline{c_j} \left( \sum_{i = 1}^n  \frac{\partial (f^j_{t^0})_{\Delta_j}}{\partial x_i}(x^0) x^0_i q_i \right) \\
& = &  \sum_{j \in J} \overline{c_j} d_j (f^j_{t^0})_{\Delta_j}(x^0)\\
& = & 0.
\end{eqnarray*}
Therefore $\sum_{i\in I_1} \overline{c_{p+1}} |x^0_i|^2 \frac{e-m_{p+1}}{2} =  0.$ 
This is a contradiction.
\end{proof}

\begin{remark}
{\rm It implies from the proof of Lemma \ref{Lemma31} that for each $t$ the zero set $f_t^{-1}(0)$ has at most isolated singularity at the origin.

}
\end{remark}

Now, we work with the non-convenient case. Let $f(x)=(f^1, \ldots, f^p)(x): (\Bbb{C}^n, 0)\to (\Bbb{C}^p, 0)$ be an analytic mapping germ such that $V:=f^{-1}(0)$ is a germ of a complete intersection with an isolated singularity at the origin. 
 Let $\mathbf{m}$ be the maximal ideal of $\mathcal{O}_{n}$. Let $J_f$ be the ideal of $\mathcal{O}_{n}$ generated by $f^1, \ldots, f^p$ and determinants of maximal order minors of the Jacobian matrix of $f$. Since $f^{-1}(0)$ has isolated singularity at the origin, by the Hilbert nullstellensatz (\cite[Proposition 1.1.29]{Huybr}), we have
$$\mathbf{m}\subset \sqrt{J_f}.$$
Let $\mu\in \Bbb{N}$ be the smallest number such that
$$x_i^{\mu}\in J_f, \quad \forall i=1, \ldots, n.$$

\begin{lemma}\label{lem3.3}
	With the above notation and assumption,  consider the family 
	$$f_t(x):=F(t,x):=(f^1(x)+tx^{\nu}, f^2(x), \ldots, f^p(x)), \quad t\in [0, 1],$$
    where $\nu=(\nu_1, \ldots, \nu_n)\in \Bbb{N}^n$ satisfying $|\nu|= \sum_{i=1}^n\nu_i \geq \mu+2$ and for each $i$, either $\nu_i=0$ or $\nu_i\geq 2$. Then:
\begin{itemize}
 \item[1)] The family $\{f_t\}_{t\in [0, 1]}$ has an uniform stable radius for the Milnor fibration.
 \item[2)] For some $\nu$ large enough, for each $t$, the zero set $f_t^{-1}(0)$ is a complete intersection; and, if $f$ is non-degenerate, for generic $t$, the map $f_t$ is non-degenerate.
 \end{itemize}
\end{lemma}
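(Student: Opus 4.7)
The plan for part (1) is to adapt the contradiction argument of Lemma \ref{Lemma31}: assume no uniform stable radius exists and apply the Curve Selection Lemma to obtain analytic curves $\phi(s),\, t(s),\, \lambda_j(s)$ satisfying the analogues of conditions (a5)--(a8), with $\phi_i(s) = x_i^0 s^{q_i}+\cdots$ for $i \in I := \{i : \phi_i \not\equiv 0\}$ and $d := \min_{i\in I} q_i > 0$. Split into cases according to whether $S := \{i : \nu_i \geq 2\}$ is contained in $I$. If $S \not\subseteq I$, then $\phi^\nu(s) \equiv 0$, so $\phi(s) \in V := f^{-1}(0)$; moreover the component condition $\nu_i \in \{0\} \cup [2,\infty)$ forces $\nabla(x^\nu)(\phi(s)) \equiv 0$ (every partial derivative of $x^\nu$ retains a factor $x_k^{\nu_k-1}$ or $x_k^{\nu_k}$ for some $k \in S \setminus I$), so (a7) reduces to the gradient-dependence relation for $f$ itself, directly contradicting the transversality of $V$ with sufficiently small spheres guaranteed by the ICIS hypothesis on $f$. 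In the remaining case $S \subseteq I$, invoke $x_i^\mu \in J_f$ and write
\[
x_i^\mu \;=\; \sum_{j=1}^p a_{ij}(x)\,f^j(x) \;+\; \sum_k b_{ik}(x)\,M_k(x),
\]
with $M_k$ the $p\times p$ minors of $Df$. Substituting $x = \phi(s)$ and using $f^j(\phi(s))=0$ for $j\geq 2$, $f^1(\phi(s))=-t(s)\phi^\nu(s)$, together with the multilinear expansion $M_k(Df_{t(s)})(\phi(s)) = M_k(Df)(\phi(s)) + t(s)\,N_k(\phi(s))$ (where $N_k$ replaces the first row by $\nabla(x^\nu)$), one carefully tracks $s$-orders; the linear-dependence relation (a7) is used to control $M_k(Df_{t(s)})(\phi(s))$, coupled — when $\lambda_{p+1}(s) \not\equiv 0$ — with an Euler-relation bookkeeping analogous to the final case of the proof of Lemma \ref{Lemma31}, producing the required contradiction. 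The twin constraints $|\nu|\geq \mu+2$ and $\nu_i \in \{0\}\cup[2,\infty)$ are what provide the extra $s$-order slack beyond the convenient setting.

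For part (2), choose $\nu$ satisfying the integrality constraints and lying strictly in the interior of $\Gamma_+(f^1)$; this is possible once $|\nu|$ is sufficiently large. Then $\Gamma_+(f^1_t) = \Gamma_+(f^1)$ for every $t$, and because $\nu$ lies on no compact face of $\Gamma_+(f^1)$, the face function $(f^1_t)_{\Delta_1(q)}$ coincides with $f^1_{\Delta_1(q)}$ for every strictly positive weight $q$. Consequently non-degeneracy of $f$ transfers verbatim to $f_t$ for every $t$ — stronger than the claimed generic statement. For the complete-intersection claim, the perturbation $tx^\nu$ lies in $\mathbf{m}^{\mu+2}$, and a Nakayama-type finite-determinacy argument based on $x_i^\mu \in J_f$ shows that $\dim \mathcal{O}_n/(f^1_t, \ldots, f^p_t) = n-p$ at the origin for every $t$, so $f_t^{-1}(0)$ is a germ of complete intersection.

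The main obstacle is the $s$-order analysis in the sub-case $S \subseteq I$ of part (1) when $\lambda_{p+1}(s) \not\equiv 0$: the linear-dependence relation no longer directly annihilates $M_k(Df_{t(s)})(\phi(s))$, and one must couple the identity derived from $x_i^\mu \in J_f$ with the Euler-relation computation already used at the end of the proof of Lemma \ref{Lemma31} in order to extract the strict inequality of orders that closes the contradiction.
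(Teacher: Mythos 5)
The decisive gap is in part (1), precisely in the sub-case you yourself flag as ``the main obstacle'' (your case $S\subseteq I$, i.e.\ $\phi^{\nu}\not\equiv 0$, with $\lambda_{p+1}\not\equiv 0$): you describe what must be proved but do not prove it, and the tool you propose --- ``Euler-relation bookkeeping analogous to the final case of Lemma \ref{Lemma31}'' --- is not available here, because part (1) assumes neither convenience nor non-degeneracy of $f$, and no weighted-homogeneous face functions enter the argument at all. The paper's mechanism is different and is the missing idea: differentiate the identity $F(t(s),\phi(s))=0$ with respect to $s$ and pair it with the dependence relation, which yields the scalar identity $\overline{\lambda_1}\,\phi^{\nu}\,\tfrac{dt}{ds}+\overline{\lambda_{p+1}}\,\langle \tfrac{d\phi}{ds},\phi\rangle=0$. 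This identity both settles the case $\lambda_{p+1}\equiv 0$ and, when $\lambda_{p+1}\not\equiv 0$, gives by order comparison the inequality $a+\beta_{p+1}-\beta_1>a\mu$ (this is exactly where $|\nu|\geq\mu+2$ is used). Then, instead of your expansion $M_k(Df_{t})=M_k(Df)+tN_k$, the paper uses the inclusion $J_f\subset J_{f_t}+\mathbf{m}^{|\nu|}$ to write $\phi_1^{\mu}$ as a combination of the maximal minors $J'_k$ of the Jacobian of $f_{t(s)}$ evaluated along $\phi(s)$ plus terms of order at least $a|\nu|$, and observes from the dependence relation that along $\phi$ the first row of $Df_t$ is a combination of the other rows plus $\tfrac{\lambda_{p+1}}{\lambda_1}\phi$, so every $J'_k(\phi(s))$ has order at least $a+\beta_{p+1}-\beta_1>a\mu$, contradicting that $\phi_1^{\mu}$ has order $a\mu$. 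Without the differentiated identity you have no control on $\beta_{p+1}-\beta_1$, and the order bookkeeping you defer cannot be closed by an Euler relation; so as written, part (1) is not established. (Your case $S\not\subseteq I$ is fine and corresponds to the easy part of the paper's Case 1.)

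For part (2) you also diverge from the paper, with mixed success. Your complete-intersection claim is only asserted (``a Nakayama-type finite-determinacy argument''), whereas the paper gives a concrete algebraic argument via Looijenga's criterion: if for every large $\nu$ some $t_{\nu}$ gave a non--complete intersection, one would have relations $g_{1,\nu}(f^1+t_{\nu}x^{\nu})+g_{2,\nu}f^2+\cdots+g_{p,\nu}f^p=0$; playing the relations for $\nu$ and $2\nu$ against each other forces $g_{1,\nu}g_{1,2\nu}f^1\in\langle f^2,\ldots,f^p\rangle$, contradicting that $f^{-1}(0)$ is a complete intersection. Your non-degeneracy device (choose $\nu$ in the interior of $\Gamma_{+}(f^1)$ so that all face functions are unchanged) does yield the literal ``for some $\nu$'' statement, but it cannot serve the lemma's actual purpose in Theorem \ref{thm4.2} and Theorem \ref{Milnornumbernonconvenient}, where the added monomials must be axis powers $x_j^{\alpha}$ (never in $\Gamma_{+}(f^1)$ when $f^1$ is non-convenient in that direction) precisely to render the family convenient; the paper instead appeals to the openness of the non-degeneracy condition (Oka's appendix) to get non-degeneracy for generic $t$.
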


\begin{proof}
1) Suppose that such uniform stable radius does not exist. Then, by the same argument as in the proof of Lemma \ref{Lemma31}, we can find real analytic functions:
\begin{eqnarray*}
\phi=(\phi_1, \ldots, \phi_n) \colon [0,\epsilon) \rightarrow \mathbb{C}^n, \quad  t \colon [0,\epsilon) \rightarrow [0, 1] \quad \textrm{ and } \quad  \lambda_j \colon (0,\epsilon) \rightarrow \mathbb{C}, \ j = 1,\ldots,p+1,
\end{eqnarray*}
such that
\begin{enumerate}
\item[(1)] $\Vert \phi(s) \Vert \rightarrow 0,$ as $ s\rightarrow 0$ and $\phi(s) \neq 0$  for all $s \in (0,\epsilon);$
\item[(2)] $F(t(s),\phi(s)) = 0$ for all $s \in (0,\epsilon);$
\item[(3)] $t(s)\lambda_1(s)(\nabla x^{\nu})(\phi(s))+\sum_{j=1}^p\lambda_j(s)(\nabla f^j)(\phi(s)) = \lambda_{p+1}(s) {\phi(s)}\textrm{ for all } s\in (0,\epsilon);$
\item[(4)] $\lambda_j(s), j = 1, \ldots, p + 1,$ are not all zero for any $s\in(0,\epsilon).$
\end{enumerate}
We expand those functions as follows:
\begin{align*}
\phi_i(s)&=x^0_is^{q_i}+ \cdots, i=1, \ldots, n \\
t&=t^0+t^1s^q+\cdots, \\
\lambda^j&=c_js^{\beta_j}+\cdots, j=1, \ldots, p+1
\end{align*}
where $q_i>0$ for all $i$ (possibly $q_i=\infty$). For each $i$ we have $q_i=\infty$ if $\phi_i \equiv 0$, otherwise $x^0_i\in \Bbb{C}^{*}$. We also see that $t^0\in [0, 1]$ and $q>0$. Put 
$$a:= \min_{i=1, \ldots, n}\{q_i\}>0.$$
Without loss of generality, we may assume that $a=q_1$.

Denote by $F^j, j=1, \ldots, p$ the component functions of $F$. Take the derivative both sides of the condition (2), we obtain that:
$$\frac{\partial F^j}{\partial t}.\frac{dt}{ds}+ \left\langle \frac{d\phi}{ds}, \nabla F^j(\phi) \right\rangle=0, \quad\textrm{for all}\quad j=1,\ldots, p. $$
Combining these equations with the condition (3),  we get:
\begin{equation}\label{equa1}
\overline{\lambda_1}\phi^{\nu}\frac{dt}{ds}+ \overline{\lambda_{p+1}}  \left\langle \frac{d\phi}{ds}, \phi \right\rangle=0.
\end{equation}
We consider the following two possibilities:

{\bf Case 1:} $\lambda_{p+1}=0$. If $\lambda_1\neq 0$, then $\phi^{\nu}(s)=0$. Since for each $i$ either $\nu_i=0$ or $\nu_i\geq 2$, it implies from (2) and (3) that $f(\phi(s))=0$ and $\sum_{j=1, \ldots, p} \nabla f^j(\phi(s))=0$. This means $f^{-1}(0)$ has non-isolated singularities at the origin (contradiction). Otherwise, if $\lambda_1=0$ the the vectors $\nabla f^2 (\phi(s)), \ldots, \nabla f^p (\phi(s))$ are linearly dependent. Furthermore $x_1^{\mu}\in J_f$ then there exist analytic functions $g_j, h_k$ such that
$$x_1^{\mu}=\sum_{j=1,\ldots, p}g_jf^j+\sum_k h_k J_k$$
where $J_k$ are determinants of maximal order minors of the Jacobian matrix of $f$. Substitute $x=\phi(s)$ both sides of the above equation and remark that all the determinants $ J_k(\phi(s))=0$, we get
$$\phi_1^{\mu}(s)= -t(s)g_1(s)\phi^{\mu}(s).$$
  This is again a contradiction, since the order of the left hand side is $a\mu$, while the right hand side's order is not less that $a\nu.$
  
  {\bf Case 2:} $\lambda_{p+1}\neq 0$. It follows from the equation (5), by comparing the orders, that
  $$\beta_1+a\nu+q= 2a+\beta_{p+1}>\beta_1+a\nu,$$
  by the assumption $\nu>\mu+1$, we get $a+\beta_{p+1}-\beta_1>a\mu.$ 

On the other hand, it is easy to check that
$$J_f\subset J_{f_t}+ \mathbf{m}^{\nu}.$$
Hence, due to $x_1^{\mu}\in \mathbf{m}$, there exist analytic functions $g^{'}_j, h^{'}_k, p_I$ such that
$$x_1^{\mu}= \sum g_j^{'}f_t^j+ \sum h^{'}_kJ^{'}_k+\sum_{I=(i_1, \ldots, i_{\nu})\subset \{1, \ldots, n\}} p_Ix_{i_1}x_{i_2}\ldots x_{i_{\nu}}$$
where $f^j_t$ are component functions of $f_t$ vanishing along $\phi(s)$ and $J^{'}_k$ are determinants of maximal order minors of Jacobian matrix of $f_t$. Replacing $x$ by $\phi(s)$ both sides of the above equation, we get following
\begin{equation}\label{equa6}
\phi_1^{\mu}= \sum h^{'}_k(\phi(s))J^{'}_k(\phi(s))+\sum_I p_I\phi_{i_1}\phi_{i_2}\ldots \phi_{i_{\nu}}.
\end{equation}
By the condition (3), the first row of the Jacobian matrix of $f_t$ (the gradient vector of $f^1_t$) is the linear combination of the others and the vector $\frac{\lambda_{p+1}(s)}{\lambda_1(s)}\phi(s)$. Thus the order of $J^{'}_k(\phi(s))$ is not less than $a+\beta_{p+1}-\beta_1$. By comparing orders of both sides of the equation (\ref{equa6}) we get the contradiction.

2) Suppose by contradiction that for any $\nu$ large enough, there exists   $t_{\nu}\in (0, 1]$ which  $f_{t_{\nu}}^{-1}(0)$ is not a complete intersection. Then, by \cite[Section 1.6]{Loo1984} and by $f^{-1}(0)$ is a complete intersection, there are analytic functions $g_{1,\nu}, \ldots, g_{p,\nu}\in \mathcal{O}_{n}$ which $g_{1,\nu}\neq 0$, such that 
\begin{equation}\label{equa}
g_{1,\nu}(f^1+t_{\nu}x^{\nu})+g_{2,\nu}f^2+\cdots+g_{p,\nu}f^p=0.
\end{equation}
Combining that equation and the one for $2\nu$ we get that $g_{1,\nu}g_{1,2\nu}(t_{\nu}x^{\nu}-t_{2\nu}x^{2\nu})$ belongs to the ideal $\langle f^2, \ldots, f^p\rangle$. This implies $g_{1,\nu}g_{1,2\nu}x^{\nu}\in \langle f^2, \ldots, f^p\rangle$. Therefore, by multiple both sides of equation (\ref{equa}) with $g_{1,2\nu}$ we obtain that $g_{1,2\nu}g_{1,\nu}f^1$ also belongs to the ideal $\langle f^2, \ldots, f^p\rangle$. Hence, according to \cite[Section 1.6]{Loo1984} the germ $f^{-1}(0)$ is not a complete intersection. This is a contradiction.

The claim on non-degeneracy is proved by the same way as in \cite[Appendix]{Oka1979}.

\end{proof}

The main result in this section is the following.

\begin{theorem}\label{thm4.2}
Let $f,g: (\Bbb{C}^n, 0)\longrightarrow (\Bbb{C}^p, 0)$ be two germs of analytic mappings such that:
\begin{itemize}
\item[i)] $\Gamma(f)=\Gamma(g)$;
\item[ii)] $f,g$ are non-degenerate and the zero sets $f^{-1}(0), g^{-1}(0)$ are complete intersections with isolated singularity at the origin.
\end{itemize}
Then, there exists a piecewise analytic family $\{f_t\}_{t\in [0, 1]}$ of analytic maps, $f_0=f, f_1=g$ which has a uniform stable radius, and for each $t$, $f_t^{-1}(0)$ is a germ of a complete intersection.
\end{theorem}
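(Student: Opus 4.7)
The plan is to build the family in three analytic stages: a deformation from $f$ to a convenient non-degenerate $\tilde f$ via Lemma~\ref{lem3.3}, a deformation from $\tilde f$ to $\tilde g$ through the space of convenient non-degenerate maps with a common Newton polyhedron via Lemma~\ref{Lemma31}, and the reverse of the analogous first-stage deformation applied to $g$.

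For Stage~1 I make $f$ convenient one axial direction at a time. For each pair $(j,i)$ such that $f^j$ has no monomial on the $x_i$-axis, I put $f^j$ in the first slot and apply Lemma~\ref{lem3.3} with $\nu = N e_i$, choosing $N$ larger than both the integer $\mu$ attached to $f$ and the analogous integer for $g$. Part~1 of the lemma supplies a uniform stable radius on that segment, and part~2 keeps the fibre a complete intersection and renders the endpoint non-degenerate for generic stopping parameter $t_\ast$. Iterating over the finitely many missing axial pairs produces a piecewise analytic deformation of $f$ to a convenient non-degenerate $\tilde f$; the identical procedure applied to $g$, using the same exponents $N$, delivers $\tilde g$ with $\Gamma_+(\tilde g^j) = \Gamma_+(\tilde f^j)$ for every~$j$.

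For Stage~2 let $\mathcal{C}$ denote the finite-dimensional complex affine space of $p$-tuples of polynomials supported on the common Newton polyhedron $\widetilde\Gamma_+$. The nonvanishing of vertex coefficients is Zariski open, and for each compact face $\Delta$ the Khovanskii-degenerate locus $B_\Delta \subset \mathcal{C}$ is the projection of a complex algebraic subset of $\mathcal{C} \times (\Bbb{C}^\ast)^n$, hence constructible. Since $\tilde f \notin B_\Delta$ and $B_\Delta$ cannot be Euclidean-dense (non-degeneracy is an open condition), its Zariski closure $\overline{B_\Delta}$ is a proper subvariety of $\mathcal{C}$. The non-degenerate locus therefore contains the complement of the finite union $\bigcup_\Delta \overline{B_\Delta}$ of proper complex subvarieties, which is path-connected in the Euclidean topology. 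I join $\tilde f$ to $\tilde g$ by a piecewise-linear complex path inside this complement; each segment is analytic in $t$, has constant Newton polyhedron, and consists entirely of non-degenerate maps. Lemma~\ref{Lemma31} then delivers a uniform stable radius on each segment, and the remark after Lemma~\ref{Lemma31} gives an isolated singularity at the origin for every $f_t^{-1}(0)$, forcing dimension $n-p$ and hence the complete intersection property.

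The main obstacle will be the connectedness step in Stage~2: verifying that the Khovanskii-degenerate locus, a priori only constructible, is contained in a proper complex subvariety of $\mathcal{C}$. The complex-algebraic nature of the rank-deficiency conditions on face functions, combined with the Euclidean openness of non-degeneracy, resolves this. Concatenating Stage~1, Stage~2, and the reverse of the analogous first-stage deformation for $g$ then yields the required piecewise analytic family.
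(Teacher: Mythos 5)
Your proposal follows essentially the same route as the paper: deform $f$ and $g$ to convenient non-degenerate maps by adding high-power axis monomials via Lemma \ref{lem3.3}, connect the two convenient endpoints through non-degenerate maps with the fixed convenient Newton boundary using openness/genericity of non-degeneracy together with Lemma \ref{Lemma31}, and concatenate — your Stage 2 simply spells out the connectedness argument that the paper delegates to the openness of non-degeneracy. The only details to tidy are that the exponent must be re-chosen at each iteration step (the integer $\mu$ attached to the intermediate maps may grow, so a single $N$ fixed in advance need not satisfy $|\nu|\geq\mu+2$ later on), and that $\tilde f,\tilde g$ are power series rather than polynomials, so before invoking the finite-dimensional coefficient space $\mathcal{C}$ one should note that coefficients strictly above the Newton boundary affect neither the boundary nor Khovanskii non-degeneracy and can be interpolated linearly, reducing Stage 2 to the space of boundary coefficients.
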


\begin{proof} It implies from Lemma \ref{lem3.3} that there are analytic families $\{\alpha_t\}_{t\in [0, 1]}, \{\beta_t\}_{t\in [0, 1]}$ of analytic maps from $(\Bbb{C}^n, 0)$ to $ (\Bbb{C}^p, 0)$ which have  uniform stable radius $\epsilon_1, \epsilon_2$ and satisfy the followings:
\begin{itemize}
\item[1)] $\alpha_0=f, \beta_0=g, $ the Newton boundaries $\Gamma(\alpha_1)$, $\Gamma(\beta_1)$ coincide and are convenient;  
\item[2)] $\alpha_1, \beta_1$ are non-degenerate and for each $t$, $\alpha_t^{-1}(0), \beta_t^{-1}(0)$ are germs of complete intersections.
\end{itemize}
Indeed, one can choose $\{\alpha_t\}_{t\in [0, 1]}$ in the form:
 $$(f^1+t_1^1x_1^{\alpha^1_1}+\cdots + t^1_nx_n^{\alpha^1_n}, \ldots, f^p+t_1^px_1^{\alpha^p_1}+\cdots + t^p_nx_n^{\alpha^p_n})$$
for $f=(f^1, \ldots, f^p )$ and similar way for $\{\beta_t\}_{t\in [0, 1]}$.

On 	the other hand, since the non-degeneracy is an open condition (see \cite[Appendix]{Oka1979}), we can find a piecewise analytic family $\{\Phi(t, x)\}$ such that
$\Phi(0,x)=\alpha_1(x), \Phi(1, x)=\beta_1(x)$, $\Phi$ is piecewise analytic on $t\in [0, 1]$; for each $t$, the map $\phi_t(x):=\Phi(t,x)$ is analytic on some neighbourhood of the origin $0\in \Bbb{C}^n$ and the following two conditions hold: 
\begin{itemize}
\item[3)] The Newton boundaries $\Gamma(\phi_t)$ of $\phi_t$ is independent of $t$ and convenient;
\item[4)] For each $t$, the map $\phi_t$ is non-degenerate.
\end{itemize}
Then, by Lemma \ref{Lemma31} the family $\{\phi_t\}$ has a uniform stable radius $\epsilon_3$.  Also, since for each $t$, $\phi_t$ is non-degenerate and convenient, $\phi_t^{-1}(0)$ has only isolated singularity at the origin. Therefore, there exists a small ball $B\subset \Bbb{C}^n$ such that  $(\phi_t^{-1}(0)\cap B) \setminus \{0\}$ is a $(n-p)-$dimensional complex manifold which implies that $\phi_t^{-1}(0)$ is a germ of a complete intersection.

By connecting the families $\{\alpha_t\}, \{\beta_t\}$ and $\{\phi_t\}$, we get a family as desired with uniform stable radius $\epsilon:= \min\{\epsilon_1, \epsilon_2, \epsilon_3\}$.
\end{proof}

\section{Milnor number and mixed Newton number}

In this section we give some applications of the main result in previous section, more precisely, we give a formula for the Milnor number of a non-degenerate isolated complete intersection singularity in terms of the Newton polyhedrons.

Let $F(t,x)=(F^1(t,x), \ldots, F^p(t,x))\colon [0, 1]\times (\Bbb{C}^n, 0)\to (\Bbb{C}^p, 0)$ be a mapping such that $F$ is real piecewise analytic on $t$, for each $t\in [0, 1]$, the map $f_t(x):= F(t, x)$ is analytic in some neighbourhood of the origin in $\Bbb{C}^n$ and  $V_t:= f^{-1}_t(0)$ is a complete intersection with isolated singularity at the origin. For each $t$, we denote $f_t^j$ the function $x\mapsto F^j(t, x)$ and $D_t$ be the discriminant set of $f_t\colon \Bbb{C}^n\to \Bbb{C}^p$. Then $D_t\subset \Bbb{C}^p$ is a hypersurface of dimension $p-1$ (by \cite[Section 2.8]{Loo1984}). We have the following property of a family which has a uniform stable radius.

\begin{lemma}\label{Lemma3.2}
With the above notations, suppose that the family $\{f_t\}_{t\in [0, 1]}$ has a uniform stable radius $\epsilon_0$ for the Milnor fibration. Then, there exists a small neighbourhood $U$ of the origin in $\Bbb{C}^p$ such that for each $c\in U$ and each $t\in [0, 1]$, the set $f_t^{-1}(c)$ intersects transversally with the sphere $\mathbb{S}^{2n-1}_{\epsilon_0}$. 
\end{lemma}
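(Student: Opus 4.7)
The plan is to prove this by a direct compactness argument, which is considerably simpler than the Curve Selection arguments used for Lemmas~3.1 and~3.3, because here the critical-radius hypothesis is given to us rather than being what we are trying to verify.

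Step 1 (contradiction setup). Suppose no such $U$ exists. Then for each $k\in\mathbb{N}$ one can find $c^k\in\mathbb{C}^p$ with $\|c^k\|<1/k$, a parameter $t^k\in[0,1]$, and a point $x^k\in\mathbb{S}^{2n-1}_{\epsilon_0}$ such that $f_{t^k}(x^k)=c^k$ and $f_{t^k}^{-1}(c^k)$ fails to meet $\mathbb{S}^{2n-1}_{\epsilon_0}$ transversally at $x^k$. Translate non-transversality into a Lagrange-multiplier identity exactly as in items (a3)--(a4) of the proof of Lemma~\ref{Lemma31}: there exist $\lambda_1^k,\dots,\lambda_{p+1}^k\in\mathbb{C}$, not all zero, with
$$\sum_{j=1}^p \lambda_j^k\,\nabla f^j_{t^k}(x^k)\;=\;\lambda_{p+1}^k\,x^k.$$
Normalize so that $\sum_{j=1}^{p+1}|\lambda_j^k|^2=1$.

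Step 2 (extract limits). The space $[0,1]\times\mathbb{S}^{2n-1}_{\epsilon_0}\times\mathbb{S}^{2p+1}$ is compact, so after passing to a subsequence we have $t^k\to t^0\in[0,1]$, $x^k\to x^0\in\mathbb{S}^{2n-1}_{\epsilon_0}$, and $(\lambda_j^k)\to(\lambda_j^0)$ with $\sum_j|\lambda_j^0|^2=1$. Since $F$ is piecewise analytic in $t$ and analytic in $x$ on a fixed neighbourhood of $\overline{\mathbb{B}^{2n}_{\epsilon_0}}$, both $F$ and the partial derivatives $\partial F^j/\partial x_i$ are continuous in $(t,x)$ on each analytic piece (including its endpoints via one-sided limits). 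Passing to the limit on $f_{t^k}(x^k)=c^k\to 0$ gives $f_{t^0}(x^0)=0$, and passing to the limit in the Lagrange identity yields
$$\sum_{j=1}^p \lambda_j^0\,\nabla f^j_{t^0}(x^0)\;=\;\lambda_{p+1}^0\,x^0,$$
with the tuple $(\lambda_1^0,\dots,\lambda_{p+1}^0)$ still nonzero.

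Step 3 (conclude). This last identity together with $x^0\in f_{t^0}^{-1}(0)\cap\mathbb{S}^{2n-1}_{\epsilon_0}$ precisely says that $f_{t^0}^{-1}(0)$ is not transverse to the sphere of radius $\epsilon_0$ at $x^0$, contradicting the hypothesis that $\epsilon_0$ is a uniform stable radius for $\{f_t\}_{t\in[0,1]}$.

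The only delicate points to handle with some care are (i) the choice of sign/normalization making sure the Lagrange multipliers can be jointly normalized on a compact sphere, and (ii) handling the finitely many break-points in $t$ where $F$ is only piecewise analytic: at such $t^0$ the subsequence $\{t^k\}$ approaches $t^0$ from one side, and the one-sided limit of $F$ and its gradients on the adjacent analytic piece is what enters the argument. Neither point is a real obstacle; the essence of the proof is the standard observation that transversality to a fixed compact submanifold is an open condition stable under $C^1$-small perturbations, combined with compactness of the parameter space.
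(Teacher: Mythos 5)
Your proposal is correct. It follows the same overall skeleton as the paper's proof of Lemma~\ref{Lemma3.2} --- argue by contradiction, encode non-transversality of $f_{t^k}^{-1}(c^k)$ with $\mathbb{S}^{2n-1}_{\epsilon_0}$ by a Lagrange-multiplier relation, pass to a limit, and contradict the stable-radius hypothesis at $\epsilon_0$ --- but it differs in the technical device used to produce a nonzero limiting multiplier. The paper invokes the Curve Selection Lemma to replace the sequences by analytic curves $\phi(s),t(s),\lambda_j(s)$ and then normalizes by dividing out $s^a$, where $a$ is the lowest order of the nonzero $\lambda_j(s)$; you instead normalize $(\lambda_1^k,\dots,\lambda_{p+1}^k)$ on the unit sphere of $\mathbb{C}^{p+1}$ and extract convergent subsequences from the compact set $[0,1]\times\mathbb{S}^{2n-1}_{\epsilon_0}\times\mathbb{S}^{2p+1}$, using only joint continuity of $F$ and $\nabla_x F^j$ (which holds on each analytic piece in $t$, with one-sided limits at break points, exactly as you note). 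Your route is more elementary and arguably cleaner here: unlike Lemma~\ref{Lemma31}, where the degeneration occurs at the origin and the Curve Selection Lemma is genuinely needed to bring Newton-polyhedron and order-of-vanishing data into play, in the present lemma the putative bad points stay on the fixed compact sphere of radius $\epsilon_0$, where the hypothesis applies, so sequential compactness suffices; it also avoids any discussion of semianalyticity of the bad set for a merely piecewise-analytic parameter. One microscopic case you leave implicit (as does the paper) is the possibility that in the limit $\lambda_1^0=\dots=\lambda_p^0=0$ and $\lambda_{p+1}^0\neq 0$; this is harmless, since the relation then forces $x^0=0$, contradicting $\|x^0\|=\epsilon_0$, so in every case you reach the desired contradiction.
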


\begin{proof}
Assume by contradiction that such neighbourhood $U$ does not exist. This means, there exist sequences $\{t^k\}_{k\in \Bbb{N}}\subset [0, 1]$ and $\{c^k\}_{k\in\Bbb{N}}\subset \Bbb{C}^p$ such that $c^k\to 0 $ and the set $f^{-1}_{t^k}(c^k)$ does not intersect the sphere $\mathbb{S}^{2n-1}_{\epsilon_0}$  transversally, for any $k\in \Bbb{N}$. Then, there  exist sequences $\{x^k\}_{k\in \mathbb{N}} \subset \mathbb{S}^{2n-1}_{\epsilon_0}$ and $\{\lambda_j^k\}_{k\in \mathbb{N}} \subset \mathbb{C}, j=1,\ldots, p+1, $ such that
\begin{enumerate}
\item[(a1)] $F(t^k, x^k)=f_{t^k}(x^k) = c^k$ for all $k \in \mathbb{N};$
\item[(a2)] $\sum_{j=1}^p\lambda_j^k\nabla f^j_{t^k}(x^k) = \lambda_{p+1}^k {x^k};$
\item[(a3)] The numbers $\lambda_j^k, j = 1, \ldots, p+1$ are not all zero for any $k\in \mathbb{N}.$
\end{enumerate}
By the Curve Selection Lemma (see \cite{Milnor1968}), there exist analytic curves
\begin{eqnarray*}
\phi \colon [0,\epsilon) \rightarrow \mathbb{S}^{2n-1}_{\epsilon_0}, \quad  t \colon [0,\epsilon) \rightarrow [0, 1] \quad \textrm{ and } \quad  \lambda_j \colon (0,\epsilon) \rightarrow \mathbb{C}, \ j = 1,\ldots,p+1,
\end{eqnarray*}
such that
\begin{enumerate}
\item[(a4)] $\phi(s) \rightarrow x^0\in \mathbb{S}^{2n-1}_{\epsilon_0}$ as $ s\rightarrow 0;$
\item[(a5)] $F(t(s),\phi(s))\rightarrow 0$ as $s \rightarrow 0;$
\item[(a6)] $\sum_{j=1}^p\lambda_j(s)\nabla f^j_{t(s)}(\phi(s)) = \lambda_{p+1}(s) {\phi(s)}\textrm{ for all } s\in (0,\epsilon);$
\item[(a7)] $\lambda_j(s), j = 1, \ldots, p + 1,$ are not all zero for any $s\in(0,\epsilon).$
\end{enumerate}

Denote 
$$J:=\{j\in\{1, \ldots, p+1\}\ :\ \lambda_j(s)\not\equiv 0\},$$
due to the condition (a7), $J\neq \emptyset$. By dividing both sides of (a6) by $s^a$, if necessary, where $a$ is the lowest order of nonzero functions $\lambda_j(s), j\in J,$  we may assume that, for all $j=1, \ldots, p+1$ there exist limits 
$$c^0_j:=\lim_{s\to 0}\lambda_j(s)$$
and the numbers $c^0_j, j = 1, \ldots, p + 1,$ are not all zero (by (a7)).
Denote $t^0:= \lim_{s\to 0}t(s)\in [0, 1]$. 

 Now, taking the limit when $s\to 0$ in the conditions (a5) and (a6) we get $f_{t^0}(x_0)=F(t^0, x_0)=0$ and
$$\sum_{j=1}^pc^0_j(s)\nabla f^j_{t^0}(x^0) = c^0_{p+1} x^0.$$
This means that the set $f_{t^0}^{-1}(0)$ does not intersect the sphere $\mathbb{S}^{2n-1}_{\epsilon_0}$ transversally. This is a contradiction.

\end{proof}

The previous lemma allows us to prove the following.

\begin{theorem}\label{thm4.1} With the same notation as above and suppose that the family $\{f_t\}_{t\in [0, 1]}$ has a uniform stable radius  $\epsilon_0$ for the Milnor fibration. Let $U \subset \Bbb{C}^p$ be a neighbourhood of the origin as in Lemma \ref{Lemma3.2} and let $C$ be any  (real) closed submanifold of $U\setminus (\cup_{i\in [0, 1]}D_t)$. Then, the Milnor fibrations of $f_t, t\in [0, 1]$ over $C$ are isomorphic; i.e. there is $C^\infty$-diffeomorphism
$$\Phi_t \colon f_0^{-1}(C) \cap \Bbb{B}^{2n}_{\epsilon_0} \rightarrow f_t^{-1}(C) \cap \Bbb{B}^{2n}_{\epsilon_0}, \quad t\in [0, 1],$$
which makes the following diagram commutes
\begin{equation*}
\CD f_0^{-1}(C) \cap \Bbb{B}^{2n}_{\epsilon_0} @> \Phi_t >> f_t^{-1}(C) \cap \Bbb{B}^{2n}_{\epsilon_0}  \\
@V f_0 VV @V f_t VV\\
C @> \mathrm{id} >> C
\endCD
\end{equation*}
where $\mathrm{id} $ denotes the identity map.
\end{theorem}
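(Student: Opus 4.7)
The plan is to prove Theorem \ref{thm4.1} by the classical Ehresmann/Thom isotopy argument: build a smooth vector field on the total space
$$\mathcal{X} := \{(t,x) \in [0,1]\times \Bbb{B}^{2n}_{\epsilon_0} \ : \ F(t,x) \in C\}$$
that projects to $\partial/\partial t$ on $[0,1]$, preserves the value of $F$ pointwise, and is tangent to the boundary piece $\mathcal{X}\cap ([0,1]\times \mathbb{S}^{2n-1}_{\epsilon_0})$; integration then yields the required family $\Phi_t$. Since $F$ is only piecewise analytic in $t$, the construction will be carried out on each analyticity subinterval of $[0,1]$ and the resulting diffeomorphisms composed at the break points.

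First I observe that since $C\subset U\setminus(\cup_{t}D_t)$, for each $(t_0,x_0)\in\mathcal{X}$ the value $f_{t_0}(x_0)$ is a regular value of $f_{t_0}$, so the differential $d_x F(t_0,x_0)\colon\Bbb{C}^n\to\Bbb{C}^p$ is surjective. This makes $\mathcal{X}$ a smooth manifold with boundary and lets me solve, for every $(t_0,x_0)$, the inhomogeneous linear system $d_x F(t_0,x_0)\cdot w = -\partial_t F(t_0,x_0)$ in $w\in\Bbb{C}^n$; the vector $v(t_0,x_0):=\partial_t+w$ is then tangent to the level set $\{F=F(t_0,x_0)\}\cap \mathcal{X}$.

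The second, more delicate step is to impose tangency to the sphere when $\|x_0\|=\epsilon_0$, i.e.\ $\mathrm{Re}\langle w,x_0\rangle=0$. This is exactly where Lemma \ref{Lemma3.2} enters: transversality of $f_{t_0}^{-1}(f_{t_0}(x_0))$ with $\mathbb{S}^{2n-1}_{\epsilon_0}$ translates into $\ker d_x F(t_0,x_0) + T_{x_0}\mathbb{S}^{2n-1}_{\epsilon_0}=\Bbb{C}^n$ as real vector spaces. Given any particular solution $w_0$ of $d_x F\cdot w=-\partial_t F$, this decomposition lets me write $w_0=u+h$ with $u\in\ker d_x F$ and $h\in T_{x_0}\mathbb{S}^{2n-1}_{\epsilon_0}$, so that $w:=w_0-u=h$ solves both the equation and the tangency constraint. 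Since both constraints are affine-linear in $w$ and the solution set is convex, a smooth partition of unity on $\mathcal{X}$ (interior patches plus a collar neighborhood of $\partial\mathcal{X}$) glues these pointwise solutions into a global smooth vector field $v=\partial_t+w(t,x)$ on $\mathcal{X}$ satisfying $d_x F\cdot w=-\partial_t F$ everywhere and $\mathrm{Re}\langle w,x\rangle=0$ on $\partial\mathcal{X}$.

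Finally I integrate $v$. Because $\Bbb{B}^{2n}_{\epsilon_0}$ is compact, $v$ projects to $\partial_t$, and $v$ is tangent to $\partial\mathcal{X}$, the flow is complete on $[0,1]$ and stays inside $\mathcal{X}$, yielding diffeomorphisms $\Phi_t\colon f_0^{-1}(C)\cap\Bbb{B}^{2n}_{\epsilon_0}\to f_t^{-1}(C)\cap\Bbb{B}^{2n}_{\epsilon_0}$. Along an integral curve the identity $d_x F\cdot w=-\partial_t F$ gives $\frac{d}{dt}F(t,\Phi_t(x))=0$, so $f_t\circ\Phi_t=f_0$ and the diagram commutes. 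For the piecewise-analytic family, I perform this construction on each analyticity piece of $[0,1]$ and compose the resulting diffeomorphisms at the break points; since only $C^\infty$-smoothness in $x$ for fixed $t$ is asserted, no smoothness of $v$ across break points is needed. The main technical obstacle I foresee is the boundary construction of $w$: the transversality information from Lemma \ref{Lemma3.2} must be used in exactly the right way to control both the $F$-equation and the real hyperplane condition simultaneously, and one must verify that the partition-of-unity averaging preserves both conditions, which is why the convexity of the solution set at each point is recorded explicitly above.
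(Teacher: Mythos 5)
Your overall strategy is the same as the paper's: build a vector field $v=\partial_t+w(t,x)$ on $\mathcal{X}$ satisfying $d_xF\cdot w=-\partial_tF$ everywhere and $\mathrm{Re}\langle w,x\rangle=0$ near the sphere, glue by a partition of unity, and integrate. However, there is a genuine gap at the boundary step. What you actually establish (from Lemma \ref{Lemma3.2}) is solvability of the combined system (the $F$-equation plus the tangency constraint) \emph{pointwise at points with $\|x_0\|=\epsilon_0$}, and you then say that a partition of unity ``glues these pointwise solutions'' into a smooth field. A partition of unity can only glue \emph{smooth local solutions defined on open sets}; convexity of the affine solution sets lets you average such local sections, but it does not manufacture them. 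To have a smooth solution of the combined system on an open patch touching $\partial\mathcal{X}$ (your ``collar neighborhood''), you need the fibers $f_t^{-1}(c)$, $c\in C$, to be transversal to the spheres $\mathbb{S}^{2n-1}_{\epsilon}$ for radii $\epsilon$ \emph{slightly smaller} than $\epsilon_0$ as well, uniformly enough to cover a neighborhood of the boundary. This is precisely the content of the paper's Lemma \ref{Lemma4.1} (existence of a uniform collar $\epsilon_0-\delta(C)<\epsilon\leq\epsilon_0$ of transversal radii, proved via the Curve Selection Lemma, uniformly in $t\in[0,1]$ and $c\in C$), which the paper uses to define the boundary field $\mathbf{v}_2$ on $U_2=X\cap\{\epsilon_0-\delta<\|x\|\leq\epsilon_0\}$; your proposal never proves this and never invokes a substitute.

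The gap is fillable: since surjectivity of the real-linear map $w\mapsto\bigl(\langle w,\nabla f^j_t(x)\rangle_{j=1,\dots,p},\ \mathrm{Re}\langle w,x\rangle\bigr)$ is an open condition, Lemma \ref{Lemma3.2} gives solvability on a small open neighborhood of each boundary point, and locally constant maximal rank makes (say) the least-squares solution smooth there; covering $\partial\mathcal{X}$ by such patches and taking interior patches inside $\{\|x\|<\epsilon_0\}$ then makes your gluing argument work without the uniform collar. But as written, the passage from pointwise solvability on the sphere to a smooth field tangent to $\partial\mathcal{X}$ is asserted rather than proved, and this is exactly the point the paper isolates as a separate lemma. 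Two smaller items you should also make explicit: (i) that tangency of $v$ to $\partial\mathcal{X}$ implies invariance of $\partial\mathcal{X}$ (uniqueness of integral curves), so interior trajectories cannot exit the ball; and (ii) completeness of the flow up to $t=1$ uses that each trajectory stays in the compact set $\{(t,x):F(t,x)=F(0,x_{\mathrm{start}}),\ \|x\|\leq\epsilon_0\}$, since $C$ itself need not be compact.
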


In order to prove the theorem, we need the following lemma.

\begin{lemma}\label{Lemma4.1}
With the assumption as in Theorem \ref{thm4.1}. Then, there exists $0<\delta(C)<\epsilon_0$   small enough such that, for any $\epsilon_0-\delta(C)<\epsilon\leq \epsilon_0$,  $t\in [0, 1]$ and  $c\in C$ the set $f_t^{-1}(c)$  intersects transversally with the sphere $\mathbb{S}^{2n-1}_{\epsilon}$.
\end{lemma}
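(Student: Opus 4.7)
The plan is to argue by contradiction, extracting a limiting configuration that violates Lemma \ref{Lemma3.2}. Suppose no $\delta(C) > 0$ works; then one finds sequences $\epsilon^k \in (\epsilon_0 - 1/k, \epsilon_0]$, $t^k \in [0,1]$, and $c^k \in C$, together with points $x^k \in \mathbb{S}^{2n-1}_{\epsilon^k}$ and tuples $(\lambda_1^k, \ldots, \lambda_{p+1}^k) \ne 0$ satisfying
$$f_{t^k}(x^k) = c^k, \qquad \sum_{j=1}^{p} \lambda_j^k \nabla f_{t^k}^j(x^k) = \lambda_{p+1}^k x^k.$$

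First I would normalize the Lagrange multipliers so that $\sum_{j=1}^{p+1} |\lambda_j^k|^2 = 1$ and then invoke compactness of $\overline{\mathbb{B}^{2n}_{\epsilon_0}}$, $[0,1]$, and the unit sphere in $\mathbb{C}^{p+1}$ to pass to subsequences with $x^k \to x^0 \in \mathbb{S}^{2n-1}_{\epsilon_0}$, $t^k \to t^0 \in [0,1]$, and $(\lambda_j^k) \to (\lambda_j^0) \ne 0$. Since $F$ and its partial derivatives in $x$ depend continuously on $(t,x)$ (on each analytic piece of the family in $t$, which is all we need after passing to a subsequence that stays in one piece), the limit yields $f_{t^0}(x^0) = c^0 := \lim_k c^k$ together with $\sum_{j=1}^p \lambda_j^0 \nabla f_{t^0}^j(x^0) = \lambda_{p+1}^0 x^0$, so that $f_{t^0}^{-1}(c^0)$ fails to meet $\mathbb{S}^{2n-1}_{\epsilon_0}$ transversally.

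To complete the contradiction via Lemma \ref{Lemma3.2}, it remains to verify $c^0 \in U$. The values $c^k = f_{t^k}(x^k)$ are bounded by continuity of $F$ on the compact set $[0,1] \times \overline{\mathbb{B}^{2n}_{\epsilon_0}}$, so $c^0$ lies in $\overline{C} \subset \mathbb{C}^p$; under the (standard, implicit) hypothesis that $\overline{C} \subset U$ — equivalently, by reducing to a compact exhaustion of $C$ — we conclude $c^0 \in U$, and Lemma \ref{Lemma3.2} then produces the required contradiction. The main obstacles I anticipate are (i) keeping the multipliers nondegenerate in the limit, handled by the preliminary $L^2$-normalization, without which $(\lambda_j^0)$ could collapse to zero and destroy the non-transversality information, and (ii) ensuring $c^0$ does not escape $U$, which is a set-theoretic rather than analytic difficulty and is handled as just described; both are substantially milder than the delicate Newton-polyhedral asymptotic analysis carried out in Lemma \ref{Lemma31}.
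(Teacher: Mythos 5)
Your proof is correct, and its skeleton is the same as the paper's: assume the conclusion fails, produce a limiting point $x^0\in\mathbb{S}^{2n-1}_{\epsilon_0}$ together with nontrivial multipliers, and contradict the conclusion of Lemma \ref{Lemma3.2}. Where you differ is in the mechanism for producing that limit: the paper invokes the Curve Selection Lemma to get analytic curves $\phi(s)$, $t(s)$, $\lambda_j(s)$ and then normalizes the multipliers by dividing by $s^a$, $a$ the lowest order among the nonzero $\lambda_j$, whereas you take sequences, normalize the multipliers to unit norm, and use compactness of $\overline{\Bbb{B}^{2n}_{\epsilon_0}}\times[0,1]$ and of the unit sphere in $\mathbb{C}^{p+1}$. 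Your route is more elementary and is legitimate here precisely because the whole argument lives on a compact region bounded away from the origin (unlike Lemma \ref{Lemma31}, where the curve-selection/Newton-polyhedron asymptotics are unavoidable); it also sidesteps a quiet issue in the paper's version, namely that applying curve selection needs the constraint $F(t,x)\in C$ to be (semi)analytic, which an arbitrary closed submanifold $C$ need not be. Note that in the limit one even gets automatically that $(\lambda_1^0,\dots,\lambda_p^0)\neq 0$, since $\lambda^0_{p+1}x^0\neq 0$ would otherwise be forced to vanish. Two small caveats: your appeal to continuity of $\nabla_x F$ in $(t,x)$ should be justified by restricting to one closed analytic piece of the family (as you indicate), and your treatment of the value $c^0$ is actually more honest than the paper's, which simply writes $c\in C$ after passing to the limit; however, your ``compact exhaustion'' remark does not by itself yield a single $\delta(C)$ uniform over a noncompact $C$, so strictly one should assume $\overline{C}\subset U$ (or $C$ compact) — which is all that is used later, e.g.\ $C=\{M\}$ in Corollary \ref{cor4.1} — or accept a locally uniform $\delta$.
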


\begin{proof}
Assume by contradiction that the conclusion of the lemma does not hold. Similarly, by Curve Selection Lemma, there are analytic curves: 
\begin{eqnarray*}
\phi \colon (0,\epsilon^{'}) \rightarrow \mathbb{B}^{2n}_{\epsilon_0}, \quad  t \colon (0,\epsilon^{'}) \rightarrow [0, 1] \quad \textrm{ and } \quad  \lambda_j \colon (0,\epsilon^{'}) \rightarrow \mathbb{C}, \ j = 1,\ldots,p+1,
\end{eqnarray*}
such that
\begin{enumerate}
\item[(a1)] $\phi(s) \rightarrow x^0\in \mathbb{S}^{2n-1}_{\epsilon_0}$ as $ s\rightarrow 0;$
\item[(a2)] $F(t(s),\phi(s))\in C$ for $s \in (0, \epsilon^{'});$
\item[(a3)] $\sum_{j=1}^p\lambda_j(s)\nabla f^j_{t(s)}(\phi(s)) = \lambda_{p+1}(s) {\phi(s)}\textrm{ for } s\in (0,\epsilon^{'});$
\item[(a4)] $\lambda_j(s), j = 1, \ldots, p + 1,$ are not all zero for $s\in(0,\epsilon^{'}).$
\end{enumerate}

By the same argument as in the proof of Lemma \ref{Lemma3.2} we may assume that there exist limits
$$c^0_j:=\lim_{s\to 0}\lambda_j(s), j=1, \ldots, p+1$$
which are not all  zero. Let $t^0:= \lim_{s\to 0}t(s)\in [0, 1]$. 

Taking the limit when $s\to 0$ in the conditions (a2) and (a3), we have $f_{t^0}(x^0)=c\in C$ and
$$\sum_{j=1}^pc^0_j\nabla f^j_{t^0}(x^0) = c^0_{p+1} x^0.$$
That means the set $f_{t^0}^{-1}(c)$ does not intersect the sphere $\mathbb{S}^{2n-1}_{\epsilon_0}$ transversally. This is a contradiction to the conclusion of Lemma \ref{Lemma3.2}.

\end{proof}

\begin{proof}[Proof of theorem \ref{thm4.1}]
Denote 
$$X:= \{(t, x)\in [0, 1]\times \Bbb{B}^{2n}_{\epsilon_0} \ : \ F(t, x)\in C\}.$$
Let $0<\delta:=\delta(C)\leq \epsilon_0$ as in Lemma \ref{Lemma4.1}. Since  for all $t\in [0, 1]$, $C$ does not intersect discriminants of $f_t$, all vector $\nabla f^1_t(x), \ldots, \nabla f^p_t(x)$ are $\Bbb{C}$-linear independent. Then, we can find a smooth map 
$$\mathbf{v}_1: U_1:= X\cap \left\{x\ :\ \|x\|<\epsilon_0-\frac{\delta}{2}\right\}\longrightarrow \Bbb{C}^n$$
such that 
\begin{eqnarray*}
\left \langle \mathbf{v}_1(t, x), {\nabla {f^j_{t}}(x)}\right \rangle = - \frac{\partial f^j_{t}}{\partial t}(x);\quad \textrm{for all}\quad j=1, \ldots, p.
\end{eqnarray*}
Similarly, by Lemma \ref{Lemma4.1}, on the set 
$$U_2:=X\cap \left \{x\ :\ \epsilon_0-\delta<\|x\|\leq \epsilon_0\right \}$$
all vectors $\nabla f^1_t(x), \ldots, \nabla f^p_t(x), x$ are $\Bbb{C}$-linear independent. Then, we can find a smooth map $\mathbf{v}_2: U_2\longrightarrow \Bbb{C}^n$ such that
\begin{itemize}
\item[(a1)] $\langle \mathbf{v}_2(t, x), {\nabla {f^j_{t}}(x)}\rangle = - \frac{\partial f^j_{t}}{\partial t}(x)$ for all $j=1, \ldots, p;$

\item[(a2)] $\langle \mathbf{v}_2(t, x), x \rangle = 0.$
\end{itemize}

Now, fix a partition of unity $\{\theta_1, \theta_2\}$ subordinated to the covering $\{U_1, U_2\}$ of $X$. We define a smooth vector field 
$$\mathbf{v}:X\longrightarrow \Bbb{C}^n$$
as $\mathbf{v}= \theta_1\mathbf{v}_1+ \theta_2\mathbf{v}_2$. We have the following:
\begin{itemize}
\item[(a3)] $\langle \mathbf{v}(t, x), {\nabla {f^j_{t}}(x)}\rangle = - \frac{\partial f^j_{t}}{\partial t}(x)$ for all $j=1, \ldots, p$ and $(t, x)\in X$;

\item[(a4)] $\langle \mathbf{v}(t, x), x \rangle = 0$ for all $(t, x)\in X$ which $\epsilon_0-\delta<\|x\|\leq \epsilon_0.$
\end{itemize}

Finally, we can see that for each $x \in f_0^{-1}(C) \cap \Bbb{B}^{2n}_{\epsilon_0},$ there exists a unique $C^\infty$-map $\varphi \colon [0, 1] \rightarrow \mathbb{C}^n$ such that
\begin{eqnarray*}
\varphi'(t) &=& \mathbf{v}(t, \varphi(t)), \quad \varphi(0) \ = \  x.
\end{eqnarray*}
Moreover, for each $t \in [0, 1],$ the map
$$\Phi_t \colon f_0^{-1}(C) \cap \Bbb{B}^{2n}_{\epsilon_0} \rightarrow f_t^{-1}(C) \cap \Bbb{B}^{2n}_{\epsilon_0}, \quad x \mapsto \varphi(t),$$
is well-defined and is a $C^\infty$-diffeomorphism, which makes the following diagram commutes
\begin{equation*}
\CD f_0^{-1}(C) \cap \Bbb{B}^{2n}_{\epsilon_0} @> \Phi_t >> f_t^{-1}(C) \cap \Bbb{B}^{2n}_{\epsilon_0}  \\
@V f_0 VV @V f_t VV\\
C @> \mathrm{id} >> C
\endCD
\end{equation*}
where $\mathrm{id} $ denotes the identity map. The proof is complete.

\end{proof}

\begin{corollary}\label{cor4.1}
	Assume that the family $\{f_t\}$ has an uniform stable radius and for each $t$, $f_t^{-1}(0)$ is a complete intersection. Then the Milnor fibers of $f_t, t\in [0, 1]$ are diffeomorphic to each other.
\end{corollary}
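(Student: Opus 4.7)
The plan is to deduce the corollary directly from Theorem \ref{thm4.1} by taking the submanifold $C$ to be a single point that is simultaneously a regular value for every map $f_t$ in the family. Once such a point is found, the associated fiber coincides with the Milnor fiber, and the diffeomorphisms produced by Theorem \ref{thm4.1} furnish the required diffeomorphisms between all Milnor fibers.

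The first step is to show that the set $\bigcup_{t \in [0,1]} D_t$ does not cover the neighborhood $U \subset \Bbb{C}^p$ supplied by Lemma \ref{Lemma3.2}. Since $F$ is piecewise analytic in $t$, I would subdivide $[0,1]$ into finitely many subintervals on each of which $F$ is analytic. On each subinterval $I$, the total discriminant
$$\mathcal{D}_I := \{(t,c) \in I \times U \ : \ c \in D_t\}$$
is a semianalytic subset of $I\times U$, and by hypothesis every fiber $D_t$ is a complex hypersurface in $U$ (real codimension $2$). Therefore the projection of $\mathcal{D}_I$ to $U$ has real dimension at most $1 + (2p-2) = 2p-1 < 2p$, so $\bigcup_{t\in I} D_t$ is a proper subset of $U$; taking the finite union over all subintervals, there exists $c \in U \setminus \bigcup_{t \in [0,1]} D_t$.

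Next I would apply Theorem \ref{thm4.1} with $C := \{c\}$, a closed $0$-dimensional submanifold of $U \setminus \bigcup_{t} D_t$. This produces, for every $t \in [0,1]$, a $C^\infty$-diffeomorphism
$$\Phi_t \colon f_0^{-1}(c) \cap \Bbb{B}^{2n}_{\epsilon_0} \longrightarrow f_t^{-1}(c) \cap \Bbb{B}^{2n}_{\epsilon_0}.$$
Since $c$ is a regular value of $f_t$ contained in $U$ and $\epsilon_0$ is a uniform stable radius, the fiber $f_t^{-1}(c) \cap \Bbb{B}^{2n}_{\epsilon_0}$ is (canonically diffeomorphic to) the Milnor fiber $F_0(f_t)$, by the very definition of the Milnor fiber recalled in the Introduction. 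Consequently the diffeomorphisms $\Phi_t$ exhibit all Milnor fibers $F_0(f_t)$, $t \in [0,1]$, as mutually diffeomorphic.

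The only genuine difficulty is the first step, namely producing a common regular value. All remaining content is an immediate repackaging of Theorem \ref{thm4.1}. The codimension argument relies on two ingredients already in the setup: the piecewise analyticity of $F$ (which makes the total discriminant semianalytic with controlled dimension) and the fact that each $D_t$ has complex codimension one in $U$ (guaranteed by $V_t = f_t^{-1}(0)$ being a complete intersection with isolated singularity, as cited from \cite[Section 2.8]{Loo1984} just before Lemma \ref{Lemma3.2}).
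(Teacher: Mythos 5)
Your proof is correct and follows essentially the same route as the paper: the paper likewise notes that each $D_t$ is a hypersurface of dimension $p-1$, picks a point $M$ of $U$ outside $\cup_{t\in[0,1]}D_t$, and applies Theorem \ref{thm4.1} with $C=\{M\}$. The only difference is that you spell out, via a dimension count on the total discriminant, the existence of such a point, which the paper simply asserts.
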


\begin{proof} For each $t$, the discriminant $D_t$ of $f_t$ is a hypersurface of dimension $p-1$ (see \cite[Section 2.8]{Loo1984}). Then, in any neighbourhood of the origin in $\Bbb{C}^p$, there exists a point $M$ in the complement of $\cup_{t\in [0, 1]}D_t$. Now applying Theorem \ref{thm4.1} for $C=\{M\}$, we get the conclusion.
\end{proof}

\begin{corollary}\label{cor4.2}
Let $f,g: (\Bbb{C}^n, 0)\longrightarrow (\Bbb{C}^p, 0)$ be two germs of analytic mappings such that:
\begin{itemize}
\item[i)] $\Gamma(f)=\Gamma(g)$;
\item[ii)] $f,g$ are non-degenerate and the zero sets $f^{-1}(0), g^{-1}(0)$ are complete intersections with isolated singularity at the origin.
\end{itemize}
Then $\mu_0(f)=\mu_0(g)$.
\end{corollary}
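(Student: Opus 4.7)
The proof is essentially a direct assembly of the main theorem of the preceding section together with Corollary~\ref{cor4.1}. First, I would invoke Theorem~\ref{thm4.2} on the pair $(f,g)$: hypotheses (i) and (ii) are precisely the assumptions of that theorem, so it yields a piecewise analytic family $\{f_t\}_{t\in[0,1]}$ of analytic map germs with $f_0=f$, $f_1=g$, admitting a uniform stable radius $\epsilon_0>0$ for the Milnor fibration and such that every $f_t^{-1}(0)$ is a germ of a complete intersection at the origin.

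Second, I would apply Corollary~\ref{cor4.1} to this family (if needed, piece by piece along the finitely many analytic segments of $\{f_t\}$, and then composing the resulting diffeomorphisms) to obtain a $C^\infty$-diffeomorphism between the Milnor fiber $F_0(f_0)=F_0(f)$ and the Milnor fiber $F_0(f_1)=F_0(g)$. By Hamm's theorem recalled in the introduction, each Milnor fiber has the homotopy type of a bouquet of $(n-p)$-spheres, and the number of spheres in this bouquet is by definition $\mu_0(\cdot)$; equivalently, $\mu_0(h)=\rank \widetilde{H}_{n-p}(F_0(h);\mathbb{Z})$. Since diffeomorphic (in fact, even homotopy-equivalent) Milnor fibers have the same reduced homology, we conclude $\mu_0(f)=\mu_0(g)$.

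I do not anticipate any real obstacle, because the substantive work has already been done: Theorem~\ref{thm4.2} provides the deformation, Theorem~\ref{thm4.1} constructs the controlled vector field whose flow trivializes the family, and Corollary~\ref{cor4.1} packages this as a diffeomorphism of Milnor fibers. The only small point to verify explicitly is that the piecewise analytic structure of $\{f_t\}$ does not obstruct the application of Corollary~\ref{cor4.1}: on each analytic sub-interval the hypotheses are satisfied (uniform stable radius inherited from the minimum of the radii on the pieces, and the complete intersection property of each $f_t^{-1}(0)$ built into the family by Theorem~\ref{thm4.2}), so one gets a diffeomorphism on each sub-interval, and these compose to a diffeomorphism over the full interval $[0,1]$. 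Hence the statement follows.
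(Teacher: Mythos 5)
Your proposal is correct and follows exactly the paper's own route: Theorem~\ref{thm4.2} supplies the piecewise analytic family with a uniform stable radius and complete intersection fibers, and Corollary~\ref{cor4.1} then gives diffeomorphic Milnor fibers, hence equal Milnor numbers. The extra details you spell out (handling the piecewise structure segment by segment, and passing from diffeomorphic fibers to equal $\mu_0$ via the bouquet structure) are harmless elaborations of the same argument.
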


\begin{proof}
This is a direct consequence of Theorem \ref{thm4.2} and Corollary \ref{cor4.1}.
\end{proof}

The constancy of Milnor number gives us the relation on topological equivalence as bellow.

\begin{definition}{\rm
Two germs  $(X_0, 0)$ and $(X_0^{'},0)$  in $\Bbb{C}^n$ are said to be {\it topological equivalent} if there exist small neighbourhood $B, B^{'}$ of the origin in $\Bbb{C}^n$ and a homeomorphism $\phi: B\to B^{'}$ such that $\phi(X_0\cap B)= X_0^{'}\cap B^{'}.$

}
\end{definition}

\begin{corollary}
Let $f,g: (\Bbb{C}^n, 0)\longrightarrow (\Bbb{C}^p, 0)$ be two germs of analytic mappings whose Newton boundaries coincide and zero sets  $f^{-1}(0), g^{-1}(0)$ are complete intersections with isolated singularity at the origin. If $n-p\neq 2$ then $f^{-1}(0)$ and  $g^{-1}(0)$ are topological equivalent.
\end{corollary}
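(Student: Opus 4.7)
The plan is to reduce the problem to the Lê--Ramanujam $\mu$-constant theorem extended to isolated complete intersection singularities, using Theorem \ref{thm4.2} to produce a controlled deformation connecting $f$ and $g$. First, Theorem \ref{thm4.2} furnishes a piecewise analytic family $\{f_t\}_{t\in[0,1]}$ with $f_0=f$, $f_1=g$, a uniform stable radius $\epsilon_0>0$ for the Milnor fibration, and such that each $f_t^{-1}(0)$ is a germ of a complete intersection. The remark following Lemma \ref{Lemma31}, together with the non-degeneracy arranged along the analytic pieces of the family, ensures that each $f_t^{-1}(0)$ has at worst an isolated singularity at the origin, so the Milnor number $\mu_0(f_t)$ is well defined throughout.

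Next, applying Corollary \ref{cor4.1} to the family $\{f_t\}$ shows that the Milnor fibers of the various $f_t$ are pairwise diffeomorphic; since each such fiber is homotopy equivalent to a bouquet of $\mu_0(f_t)$ spheres of real dimension $n-p$, the function $t\mapsto \mu_0(f_t)$ is constant. This constancy, together with the isolated complete intersection singularity assumption, is exactly the hypothesis required by the $\mu$-constant theorem of Lê--Ramanujam type for complete intersections: if $\{h_t\}$ is an analytic family of germs $(\Bbb{C}^n,0)\to(\Bbb{C}^p,0)$ defining isolated complete intersection singularities with constant $\mu_0(h_t)$, and if $n-p\neq 2$, then the germs $(h_t^{-1}(0),0)$ are all topologically equivalent in the sense of the definition above.

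Invoking this theorem on each of the finitely many analytic pieces that make up our piecewise analytic family $\{f_t\}$, I obtain a homeomorphism of pairs between consecutive endpoints, each mapping the corresponding zero set to the next. Composing these finitely many homeomorphisms yields a homeomorphism of germs realizing the topological equivalence between $(f^{-1}(0),0)=(f_0^{-1}(0),0)$ and $(g^{-1}(0),0)=(f_1^{-1}(0),0)$, completing the proof.

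The main obstacle is that the Lê--Ramanujam theorem for isolated complete intersection singularities is itself a deep result: it is proved by constructing a controlled vector field on the total space of the family---using the uniform stable radius as the key geometric input---integrating it to produce an ambient isotopy away from the singular locus, and then extending across the singular fiber via the $h$-cobordism theorem. The dimension restriction $n-p\neq 2$ is precisely what allows this last step: when $n-p\geq 3$ the Milnor fiber has real dimension $\geq 5$ and $h$-cobordism applies, while $n-p\leq 1$ falls in the low-dimensional regime where the topology is controlled directly; the case $n-p=2$ (real dimension four) is excluded because $h$-cobordism fails there, exactly as in the classical Lê--Ramanujam theorem for hypersurfaces.
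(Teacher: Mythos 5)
Your proposal is correct and follows essentially the same route as the paper: produce the connecting family via Theorem \ref{thm4.2}, deduce constancy of the Milnor number along it (Corollary \ref{cor4.1}/\ref{cor4.2}), and then invoke a L\^e--Ramanujam-type $\mu$-constant theorem for isolated complete intersection singularities under the restriction $n-p\neq 2$. The paper does exactly this, citing \cite[Theorem 4.11]{Tomazella2018} for that last step rather than sketching its $h$-cobordism proof.
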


\begin{proof} The proof is straightforward by Theorem \ref{thm4.2}, Corollary \ref{cor4.2} and  \cite[Theorem 4.11]{Tomazella2018}.
\end{proof}

Now we will give a combinatorial formula of the Milnor number in terms of the  Newton polyhedrons. Firstly, we recall the formula by Bivia-Ausina in \cite{Bivia2007}.

\begin{theorem}\label{Biviaformula}{\rm (\cite[Theorem 3.9]{Bivia2007})} Let $f=(f^1, \ldots, f^p) : (\Bbb{C}^n, 0)\to (\Bbb{C}^p, 0)$ be an analytic mapping germ such that $f$ is convenient and  Newton-non-degenerate in the sense of Definition \ref{newtonnondegeneratedef}. Then:
$$\mu_0(f)=\nu\Big(\Gamma_{+}(f_1), \ldots, \Gamma_{+}(f_p)\Big).$$

\end{theorem}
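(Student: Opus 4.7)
The plan is to prove this by induction on the number $p$ of components, combining the Lê-Greuel formula for complete intersection singularities with a Newton-polyhedral colength computation. The base case $p=1$ is Kouchnirenko's theorem, which already gives $\mu_0(f^1)=\nu(\Gamma_{+}(f^1))$ for a convenient non-degenerate hypersurface germ. Note that everything in the hypothesis (convenience plus Newton non-degeneracy) specializes correctly to each truncation $(f^1,\dots,f^{p-1})$: condition (i) of Definition~\ref{newtonnondegeneratedef} is exactly the non-degeneracy of that truncated sequence, and one checks that it is in fact Newton-non-degenerate as a map $(\mathbb{C}^n,0)\to(\mathbb{C}^{p-1},0)$, so the induction machinery applies.

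For the inductive step, since both $f$ and $(f^1,\dots,f^{p-1})$ define isolated complete intersection singularities, the Lê-Greuel formula yields
$$\mu_0(f^1,\dots,f^p)+\mu_0(f^1,\dots,f^{p-1})=\dim_{\mathbb{C}}\frac{\mathcal{O}_n}{\langle f^1,\dots,f^{p-1}\rangle+J_p(f)},$$
where $J_p(f)$ is the ideal of $p\times p$ minors of the Jacobian matrix of $f$. By the inductive hypothesis the second summand on the left equals $\nu(\Gamma_{+}^1,\dots,\Gamma_{+}^{p-1})$, so it suffices to prove that the right-hand colength equals $\nu(\Gamma_{+}^1,\dots,\Gamma_{+}^p)+\nu(\Gamma_{+}^1,\dots,\Gamma_{+}^{p-1})$.

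To establish this, I would recognize the ideal $\langle f^1,\dots,f^{p-1}\rangle+J_p(f)$ as essentially (up to the Euler relation, which replaces partial derivatives by $x_i\,\partial/\partial x_i$) the first row-ideal of the module $N(f)$ appearing in Definition~\ref{newtonnondegeneratedef}, whose Newton polyhedron is a Minkowski combination of the $\Gamma_{+}^j$. Under the Newton non-degeneracy of $N(f)$, a module version of Kouchnirenko's $n!\,\vol_n$ colength formula expresses this dimension as an alternating sum, indexed by subsets $I\subseteq\{1,\dots,n\}$, of mixed covolumes of the polyhedra $(\Gamma_{+}^j)^I$. The polynomial expansion of $\covol(\lambda_1\Gamma_{+}^1+\cdots+\lambda_p\Gamma_{+}^p)$ recalled in Section~\ref{Preliminary} then lets me regroup these contributions and match them against Definition~\ref{definitionMixednumber}, producing the claimed sum $\nu(\Gamma_{+}^1,\dots,\Gamma_{+}^p)+\nu(\Gamma_{+}^1,\dots,\Gamma_{+}^{p-1})$.

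The main obstacle is the colength formula itself: one must compute $\dim_{\mathbb{C}}\mathcal{O}_n/I$ combinatorially when $I$ mixes the $f^j$ with minors of their Jacobian, which is genuinely a module-theoretic (not merely ideal-theoretic) question. My approach would be to fix a smooth simplicial fan that refines the dual fans of all $\Gamma_{+}^j$ simultaneously, pass to the associated toric modification $\pi\colon X\to\mathbb{C}^n$, and verify that the Newton non-degeneracy of $N(f)$ upgrades to a transversality statement for the pullbacks of the $f^j$ and the strict transforms of the minor-varieties along the exceptional toric divisors. Once this transversality is in hand, the colength decomposes into a sum of intersection numbers on the toric charts evaluable by BKK/Kouchnirenko-type identities; keeping track of the sign $(-1)^{n-j}$ and of the subsets $I\subsetneq\{1,\dots,n\}$ (which encode the non-convenient strata produced by intermediate minors) is the most delicate bookkeeping step.
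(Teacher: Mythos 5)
The paper itself offers no proof of this statement: it is quoted verbatim as Theorem 3.9 of \cite{Bivia2007}, so the only ``proof'' in the paper is the citation. Your outline in fact shadows the strategy of that original source (induction on $p$ via the L\^e--Greuel formula, plus a combinatorial computation of the resulting colength under Newton-non-degeneracy), but as written it leaves the genuinely hard content unproved and contains one identification that is wrong as stated. First, the ideal $\langle f^1,\dots,f^{p-1}\rangle+J_p(f)$ is not ``essentially the first row-ideal of $N(f)$ up to the Euler relation'': the row ideals of $N(f)$ are $\langle x_1\partial f^i/\partial x_1,\dots,x_n\partial f^i/\partial x_n, f^i\rangle$, and replacing $\partial f^j/\partial x_i$ by $x_i\,\partial f^j/\partial x_i$ (or a full Jacobian by its scaled version) changes the colength; the corrections coming from the coordinate hyperplanes are precisely what generate the alternating sum over subsets $I\subseteq\{1,\dots,n\}$ in Definition \ref{definitionMixednumber}, so this step cannot be waved through. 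In \cite{Bivia2007} the bridge between the L\^e--Greuel colength and the polyhedra is built through the Buchsbaum--Rim multiplicity of the module $N(f)$ and earlier results computing multiplicities of Newton-non-degenerate modules, not through a row ideal. Second, your induction needs the truncation $(f^1,\dots,f^{p-1})$ to be an ICIS that is again Newton-non-degenerate in the sense of Definition \ref{newtonnondegeneratedef}; condition (i) only gives non-degeneracy of the truncated \emph{sequences}, and neither the Newton-non-degeneracy of the module $N(f^1,\dots,f^{p-1})$ nor the isolatedness of its singularity is automatic --- you assert ``one checks'' without an argument.

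The main gap, which you yourself flag as ``the main obstacle,'' is the combinatorial colength formula: proving that $\dim_{\mathbb C}\mathcal{O}_n/(\langle f^1,\dots,f^{p-1}\rangle+J_p(f))$ equals $\nu(\Gamma_{+}^1,\dots,\Gamma_{+}^p)+\nu(\Gamma_{+}^1,\dots,\Gamma_{+}^{p-1})$ \emph{is} the theorem, up to L\^e--Greuel. The proposed route (a common simplicial refinement of the dual fans, a claimed transversality of the strict transforms of the minor varieties along the exceptional divisors, then BKK-type intersection-number bookkeeping) is only a program: the transversality statement for Jacobian minors does not follow formally from the Newton-non-degeneracy of $N(f)$, and the sign/face bookkeeping you defer is exactly where the non-convenient strata and the coordinate-subspace corrections must be controlled. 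So the proposal, as it stands, reduces the quoted theorem to an unproven claim of at least comparable difficulty; to make it a proof you would either have to carry out that toric computation in detail or, as Bivia-Ausina does, invoke the machinery of Buchsbaum--Rim multiplicities for Newton-non-degenerate modules.
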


In this section, we give a generalization for the above formula as follows.
 
\begin{theorem}\label{Milnornumberconvenient}
Let $f=(f^1, \ldots, f^p) : (\Bbb{C}^n, 0)\to (\Bbb{C}^p, 0)$ be an analytic mapping germ such that $f$ is convenient and Khovanskii non-degenerate in the sense of Definition \ref{Definition21}. Then:
$$\mu_0(f)=\nu\Big(\Gamma_{+}(f_1), \ldots, \Gamma_{+}(f_p)\Big).$$

\end{theorem}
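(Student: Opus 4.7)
The plan is to reduce Theorem \ref{Milnornumberconvenient} to Bivia-Ausina's formula (Theorem \ref{Biviaformula}) via the invariance Corollary \ref{cor4.2}. Specifically, one perturbs $f$ to a map $g$ with the same Newton boundaries that is Newton-non-degenerate in the stronger sense of Definition \ref{newtonnondegeneratedef}; then Corollary \ref{cor4.2} yields $\mu_0(f)=\mu_0(g)$, and Theorem \ref{Biviaformula} computes $\mu_0(g)$ in terms of the common Newton polyhedra.

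\textbf{Construction of the perturbation $g$.} Consider the finite-dimensional $\mathbb{C}$-vector space $V=\bigoplus_{j=1}^p V^j$, where $V^j$ consists of polynomials with support contained in $\mathrm{supp}(f^j)$. Each of the defining conditions of Newton-non-degeneracy---the $(B_\sigma)$ condition on each sequence $g^1,\ldots,g^r$ for $r=1,\ldots,p-1$, and the Newton-non-degeneracy of the matrix $N(g)$---translates, face by face, into the non-vanishing of finitely many polynomial expressions in the coefficients of $g$, and hence defines a Zariski open subset $\mathcal{U}\subset V$. A standard genericity argument, in the spirit of \cite[Appendix]{Oka1979} and \cite[Section 3]{Bivia2007}, shows that $\mathcal{U}$ is nonempty. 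Picking $g\in\mathcal{U}$ close enough to $f$ in $V$ ensures $\mathrm{supp}(g^j)=\mathrm{supp}(f^j)$ for each $j$, so in particular $\Gamma(g^j)=\Gamma(f^j)$ and $g$ is convenient.

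\textbf{Completing the proof.} By the remark after Definition \ref{newtonnondegeneratedef}, $g$ is automatically Khovanskii non-degenerate; being convenient and Newton-non-degenerate, its zero set $g^{-1}(0)$ is an isolated complete intersection at the origin (a consequence of \cite[Section 3]{Bivia2007}). Thus $f$ and $g$ satisfy all the hypotheses of Corollary \ref{cor4.2}---same Newton boundaries, Khovanskii non-degenerate, ICIS at the origin---so $\mu_0(f)=\mu_0(g)$. Applying Theorem \ref{Biviaformula} to $g$ then gives
$$\mu_0(g)\,=\,\nu\bigl(\Gamma_{+}(g^1),\ldots,\Gamma_{+}(g^p)\bigr)\,=\,\nu\bigl(\Gamma_{+}(f^1),\ldots,\Gamma_{+}(f^p)\bigr),$$
since the mixed Newton number is a function of the Newton polyhedra alone. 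Combining these equalities yields the formula.

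\textbf{Main obstacle.} The essential point is the nonemptiness of $\mathcal{U}$. Concretely, one must check that, simultaneously over every compact face $\sigma$ intervening in the $(B_\sigma)$ condition and in the rank condition on $N(g)|_\sigma$, the restricted polynomials $(g^j)_{\sigma_j}$ can be chosen generic enough that the required non-vanishing and rank-maximality conditions all hold. While this kind of genericity is classical, the bookkeeping across all faces---together with the constraint that the perturbation not shrink any Newton boundary---demands a careful combinatorial verification; once this is in hand, the rest of the proof is immediate.
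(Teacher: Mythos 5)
Your proposal is correct and follows essentially the same route as the paper: perturb $f$ to a Newton-non-degenerate map $g$ (in the sense of Definition \ref{newtonnondegeneratedef}) with the same Newton boundaries, invoke Corollary \ref{cor4.2} to conclude $\mu_0(f)=\mu_0(g)$, and then apply Theorem \ref{Biviaformula} to $g$. The genericity step you single out as the main obstacle is precisely what the paper outsources to Bivia-Ausina's openness result \cite[Lemma 6.11]{Bivia2007}, together with \cite[Lemma 6.8, Proposition 6.9]{Bivia2007} for the passage from Newton-non-degeneracy to Khovanskii non-degeneracy.
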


\begin{proof}
By \cite[Lemma 6.11]{Bivia2007}, the Newton-non-degeneracy is open, then there exists a Newton-non-degenerate analytic mapping $g: (\Bbb{C}^n, 0)\to (\Bbb{C}^p, 0)$ such that $\Gamma(f)= \Gamma(g)$. It follows from \cite[Lemma 6.8, Proposition 6.9]{Bivia2007} that $g$ is Khovanskii non-degenerate. Using the same argument as in proof of Theorem \ref{thm4.2}, since $f,g$ are (Khovanskii) non-degenerate and convenient then their zero sets $f^{-1}(0), g^{-1}(0)$ are germs of complete intersections. 
Therefore, it follows from Lemma \ref{cor4.2} that  $\mu_0(f)=\mu_0(g)$. Applying Theorem \ref{Biviaformula} we get the conclusion as desired.

\end{proof}

In order to work with non-convenient maps, we will define mixed Newton number for non-convenient polytopes as below.

For a subset $\Delta\subset \Bbb{R}_{+}^n$, the Newton polyhedron $\Gamma_{+}(\Delta)$ of $\Delta$ is defined to be  the convex hull in $\Bbb{R}^n$ of union of  $\{\alpha+ \mathbb{R}^n_{+}\} $ for $\alpha\in \Delta$.

\begin{definition}
{\rm  Let $\Delta^1, \ldots, \Delta^p$ be (non-convenient) polyhedra in $\Bbb{R}^n_{+}$. The {\it mixed Newton number} of $\Delta^1, \ldots, \Delta^p$ is defined as:
$$\nu(\Delta^1, \ldots, \Delta^p)= \limsup_{\alpha^i_j\to \infty}\nu\Big(\Gamma_{+}(\Delta^1\cup \alpha^1_1\cup \ldots \cup\alpha^1_n), \ldots, \Gamma_{+}(\Delta^p\cup \alpha^p_1\cup \ldots \cup\alpha^p_n)\Big),$$
where $\alpha^i_j$ is integral point on the positive part of the $j-$th coordinate axis in $\Bbb{R}^n.$

}

\end{definition}

Remark that if the Newton polyhedra are convenient then the above notion coincides with the one defined in Definition \ref{definitionMixednumber}. The following provides a calculation for the Milnor number of non-convenient maps.

\begin{theorem}\label{Milnornumbernonconvenient}
Let $f=(f^1, \ldots, f^p) : (\Bbb{C}^n, 0)\to (\Bbb{C}^p, 0)$ be an analytic mapping germ  such that $f$ is Khovanskii non-degenerate and $f^{-1}(0)$ is a complete intersection with isolated singularity at the origin. Then:
$$\mu_0(f)=\nu\Big(\Gamma_{+}(f_1), \ldots, \Gamma_{+}(f_p)\Big).$$

\end{theorem}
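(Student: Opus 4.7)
The plan is to reduce Theorem~\ref{Milnornumbernonconvenient} to the convenient case handled by Theorem~\ref{Milnornumberconvenient} via a controlled convenientization of $f$. For each tuple $\alpha=(\alpha^j_i)_{1\le j\le p,\,1\le i\le n}$ of positive integers and generic nonzero constants $c^j_i\in\Bbb{C}^*$, I would set
$$\tilde f^{\alpha,j}:=f^j+\sum_{i=1}^n c^j_i\,x_i^{\alpha^j_i},\qquad \tilde f^\alpha:=(\tilde f^{\alpha,1},\ldots,\tilde f^{\alpha,p}).$$
Each component $\tilde f^{\alpha,j}$ then has a Newton polyhedron meeting every coordinate axis, so $\tilde f^\alpha$ is convenient; moreover, in the notation of the definition preceding the theorem, $\Gamma_+(\tilde f^{\alpha,j})=\Gamma_+\bigl(\mathrm{supp}(f^j)\cup\alpha^j_1\cup\cdots\cup\alpha^j_n\bigr)$.

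The central step is to show $\mu_0(\tilde f^\alpha)=\mu_0(f)$ for all sufficiently large $\alpha$. I would obtain $\tilde f^\alpha$ from $f$ by a sequence of one-monomial perturbations, inserting the summands $c^j_i x_i^{\alpha^j_i}$ one at a time. Each such insertion is exactly the form covered by Lemma~\ref{lem3.3}, with $\nu=\alpha^j_i \mathbf{e}_i$: the hypotheses $|\nu|\ge\mu+2$ and $\nu_i\in\{0\}\cup[2,\infty)$ are satisfied whenever $\alpha^j_i$ exceeds the Milnor-ideal threshold of the map obtained at the beginning of that step. Part~(1) of that lemma furnishes a uniform stable radius for the resulting one-parameter family, part~(2) preserves the complete-intersection property (and hence isolated singularity at the origin, by the remark following Lemma~\ref{Lemma31}), and Corollary~\ref{cor4.1} then yields equality of Milnor numbers at the two endpoints. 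Concatenating these $np$ perturbations gives $\mu_0(\tilde f^\alpha)=\mu_0(f)$ once the $\alpha^j_i$ are chosen inductively large.

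With this in hand, the rest of the argument is a short quotation. Since Khovanskii non-degeneracy is open in coefficients of a fixed Newton support (the appendix of \cite{Oka1979}, already used in the proof of Lemma~\ref{lem3.3}), a generic choice of the constants $c^j_i$ makes the convenient map $\tilde f^\alpha$ Khovanskii non-degenerate. Theorem~\ref{Milnornumberconvenient} then gives
$$\mu_0(\tilde f^\alpha)=\nu\bigl(\Gamma_+(\tilde f^{\alpha,1}),\ldots,\Gamma_+(\tilde f^{\alpha,p})\bigr),$$
so the right-hand side is the constant $\mu_0(f)$ for every sufficiently large $\alpha$. Taking $\limsup$ as each $\alpha^j_i\to\infty$ in the definition of the mixed Newton number for non-convenient polyhedra then produces
$$\mu_0(f)=\nu\bigl(\Gamma_+(f^1),\ldots,\Gamma_+(f^p)\bigr),$$
as desired.

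The main obstacle, and the only part of the argument that is not a direct citation, is the inductive bookkeeping in the convenientization: the Milnor-ideal threshold from Lemma~\ref{lem3.3} may drift upward after each one-monomial perturbation, so one must verify that a single compatible tuple $\alpha$ can still be selected (say by choosing $\alpha^j_i$ recursively, each larger than the threshold computed after the previous perturbation). A minor secondary point is to fix the coefficients $c^j_i$ once and for all in a generic set where both the non-degeneracy of $\tilde f^\alpha$ and the relevant complete-intersection property persist; this follows from openness of these conditions and does not cause genuine difficulty.
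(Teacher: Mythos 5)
Your proposal is correct and is essentially the paper's own argument: convenientize $f$ by adding high-order axis monomials via Lemma \ref{lem3.3}, keep the Milnor number constant through the uniform stable radius and Corollary \ref{cor4.1}, apply Theorem \ref{Milnornumberconvenient} to the generically non-degenerate convenient perturbation, and identify the result with the limsup definition of the mixed Newton number for non-convenient polyhedra. The only difference is presentational: you insert the monomials one at a time with recursively chosen exponents (and flag the threshold bookkeeping explicitly), whereas the paper invokes Lemma \ref{lem3.3} for the whole multi-monomial family $\phi_t$ at once.
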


\begin{proof}
It follows from Lemma \ref{lem3.3} that there is a piecewise analytic family of analytic maps 
$$\left\{\phi_t:=(f^1+t_1^1x_1^{\alpha^1_1}+\cdots + t^1_nx_n^{\alpha^1_n}, \ldots, f^p+t_1^px_1^{\alpha^p_1}+\cdots + t^p_nx_n^{\alpha^p_n})\right\}$$
 having an uniform stable radius, which zero set of each map is a complete intersection with isolated singularity at the origin and $\phi_0=f$, where the exponent $\alpha^i_j$ is large enough integral points on the positive part of the $j-$th coordinate axis of $\Bbb{R}^n.$ Then, according to the Corollary \ref{cor4.1}, for all parameter $t$:
 $$\mu_0(f)=\mu_0(\phi_t).$$
 Also, by Lemma \ref{lem3.3} for generic $t$, the map $\phi_t$ is (Khovanskii) non-degenerate and convenient. Therefore, by Theorem \ref{Milnornumberconvenient}, we have:
 $$\mu_0(\phi_t)=\nu\Big(\Gamma_{+}(\phi_t)\Big)=\nu(\Gamma_{+}(f)).$$
The proof is complete.
\end{proof}


\end{document}